\tikzset{every picture/.style={line width=0.75pt}} 
\patchcmd\Gread@eps{\@inputcheck#1 }{\@inputcheck"#1"\relax}{}{}
\newcommand{\intav}[1]{\mathchoice {\mathop{\vrule width 6pt height 3 pt depth  -2.5pt
\kern -8pt \intop}\nolimits_{\kern -6pt#1}} {\mathop{\vrule width
5pt height 3  pt depth -2.6pt \kern -6pt \intop}\nolimits_{#1}}
{\mathop{\vrule width 5pt height 3 pt depth -2.6pt \kern -6pt
\intop}\nolimits_{#1}} {\mathop{\vrule width 5pt height 3 pt depth
-2.6pt \kern -6pt \intop}\nolimits_{#1}}}
\def\polhk#1{\setbox0=\hbox{#1}{\ooalign{\hidewidth\lower1.5ex\hbox{`}\hidewidth\crcr\unhbox0}}}
\def\XXint#1#2#3{{\setbox0=\hbox{$#1{#2#3}{\int}$ }
\vcenter{\hbox{$#2#3$ }}\kern-.6\wd0}}
\newtheorem{theorem}{Theorem}
\newtheorem{definition}{Definition}
\newtheorem{proposition}{Proposition}
\newtheorem{Assumption}{Assumption}
\patchcmd{\env@cases}{1.2}{.8}{}{}
\begin{document}

\title[Differentiability of solutions]{Differentiability of  solutions for a degenerate fully nonlinear free transmission problem}

\author{Edgard A. Pimentel}
\address{CMUC, Department of Mathematics, University of Coimbra, 3000-143 Coimbra, Portugal.}
\email{edgard.pimentel@mat.uc.pt}

\author{David Stolnicki}
\address{University of Warsaw, Krakowskie Przedmiescie 26/28, 00-927 Warszawa.}
\email{d.stolnicki-gonzalez@uw.edu.pl}

\date{\today} 

\keywords{Degenerate fully nonlinear free transmission problems; differentiability of solutions; Dini continuity.}

\subjclass[2020]{35B65.}

\maketitle

\begin{abstract}

\noindent We study a fully nonlinear free transmission problem in the presence of general degeneracy laws. Under minimal conditions on the degeneracy of the model, we establish the differentiability of viscosity solutions.

\end{abstract}

\vspace{.1in}

\section{Introduction}\label{sec_intro}

We consider viscosity solutions of
\begin{equation}\label{eq_main}
	\begin{split}
        \sigma_1(|Du|)F(D^2u)=f&\hspace{.2in}\mbox{in}\hspace{.2in}\Omega\cap\left\lbrace u>0 \right\rbrace\\
        \sigma_2(|Du|)F(D^2u)=f&\hspace{.2in}\mbox{in}\hspace{.2in}\Omega\cap\left\lbrace u<0 \right\rbrace,
	\end{split}
\end{equation}
where $F:S(d)\to\mathbb{R}$ is a uniformly elliptic operator, $f\in L^\infty(\Omega)\cap C(\overline{\Omega})$, and $\sigma_i:\mathbb{R}_+\to\mathbb{R}_+$ are degeneracy rates. Here, $S(d)\sim\mathbb{R}^\frac{d(d+1)}{2}$ denotes the space of symmetric matrices of order $d$, whereas $\Omega\subset\mathbb{R}^d$ is open, bounded and connected.

Concerning the degeneracy rates $\sigma_1$ and $\sigma_2$, we require these functions to be monotone. We also suppose $\sigma_1$ and $\sigma_2$ to have inverses $\sigma_1^{-1}$ and $\sigma_2^{-1}$, which are themselves Dini moduli of continuity. Under such a condition, we prove the differentiability of the solutions to \eqref{eq_main}.

The model in \eqref{eq_main} amounts to a degenerate fully nonlinear free boundary problem. In particular, \eqref{eq_main} describes a diffusion process degenerating as a modulus of continuity of the gradient. Meanwhile, the degeneracy depends on the sign of the solution. 

Discontinuous diffusions have been studied in the literature since the work of Mauro Picone, circa 1950; see \cite{Picone1954}. The problem formulated in \cite{Picone1954} finds its roots in the realm of elasticity theory. We refer the reader to \cite{Borsuk2010}. After the establishment of a soundly based theory of the existence of solutions, the analysis of regularity properties took place. For example, we mention \cite{li1,li2}.

More recently, a corpus of finer regularity results appeared in the context of transmission problems. Here, the authors examine the regularity of the solutions under \emph{minimal} regularity conditions on the transmission interface; see \cite{CafCarSti2021, SorSti2023}, 

In recent years, the analysis of transmission problems started to account for diffusions with discontinuities across \emph{solution-dependent} interfaces. The analysis of a variational, uniformly elliptic free transmission problem appeared recently in \cite{AmaTei2015}. See \cite{Shag1} for a variational free transmission problem modelled after the $p$-Laplace operator. 
 
Free transmission problems driven by uniformly elliptic fully nonlinear operators have been addressed in \cite{PimSwi2022}, \cite{PimSan2020}, and \cite{CarKri2024}. Degenerate fully nonlinear free transmission problems are the subject of \cite{HuaPimRamSwi2021}. The authors consider
\begin{equation}\label{eq_mainswi}
	\begin{split}
		\left|Du\right|^{\theta_1}F(D^2u)&=f\hspace{.2in}\mbox{in}\hspace{.2in}\Omega\cap\left\lbrace u>0 \right\rbrace\\
		\left|Du\right|^{\theta_2}F(D^2u)&=f\hspace{.2in}\mbox{in}\hspace{.2in}\Omega\cap\left\lbrace u<0 \right\rbrace,
	\end{split}
\end{equation}
where $F$ is a uniformly elliptic operator and $0<\theta_1<\theta_2$ are fixed constants. They prove the existence of $L^p$-viscosity solutions and an optimal regularity theory in H\"older spaces. The estimates in \cite{HuaPimRamSwi2021} depend explicitly on $\theta_1$ and $\theta_2$. See \cite{Cristiana, DavidJesus} for the analysis of interesting variants of the model in  \eqref{eq_mainswi}.

The model in \eqref{eq_mainswi} is motivated by the study of fully nonlinear equations degenerating as a power of the gradient. The workhorse of the theory is the equation
\begin{equation}\label{eq_horse}
	|Du|^\theta F(D^2u)=f\hspace{.2in}\mbox{in}\hspace{.2in}\Omega,
\end{equation}
where $\theta\in(-1,\infty)$ is a fixed constant, $F$ is a uniformly elliptic operator, and $f\in L^\infty(\Omega)\cap C(\overline\Omega)$. It is well-known that solutions to \eqref{eq_horse} are locally $C^{1,\alpha}$-regular; see \cite{Imbert-Silvestre2013,Birindelli-Demengel2014,Araujo-Ricarte-Teixeira2015}. 

A generalisation of the degeneracy law $p\mapsto |p|^\theta$ appeared in \cite{AndPelPimTei2022}. In that paper, the authors propose an equation of the form
\begin{equation}\label{eq_clea}
	\sigma\left(\left|Du\right|\right) F(D^2u)=f\hspace{.2in}\mbox{in}\hspace{.2in}\Omega,
\end{equation}
where $\sigma:\mathbb{R}_+\to\mathbb{R}_+$ is a modulus of continuity whose inverse $\sigma^{-1}$ is itself a Dini-modulus of continuity. Working under such a condition on $\sigma$, the authors prove that solutions to \eqref{eq_clea} are locally of class $C^1$.

The connection of \eqref{eq_horse} and \eqref{eq_mainswi} is subtle. The seminal contribution connecting these models appeared in \cite{BroRamPimTei}. In that paper, the authors examine a \emph{state-dependent} degeneracy rate $\theta:\Omega\to\mathbb{R}$, giving rise to
\begin{equation}\label{eq_bprt}
	|Du|^{\theta(x)}F(D^2u)=f\hspace{.2in}\mbox{in}\hspace{.2in}\Omega.
\end{equation}
In \cite{BroRamPimTei}, the authors establish the optimal regularity of the solutions in $C^{1,\alpha}$ spaces. A delicate argument yields optimal regularity estimates \emph{not depending} on the modulus of continuity of $\theta(\cdot)$. This fine analysis frames degenerate free boundary problems as a limit case of a model driven by continuous variable exponents. Compare \eqref{eq_mainswi} with \eqref{eq_bprt}, where $\theta(x)$ is an approximation of the exponential rule switching from $\theta_1$ to $\theta_2$. 

Motivated by the ideas launched in \cite{BroRamPimTei}, we extend the approach in \cite{HuaPimRamSwi2021} to \eqref{eq_main}. Without loss of generality, we set $\Omega\equiv B_1$ and state our main result as follows.

\begin{theorem}[Interior $C^1$-regularity estimates]\label{theo_regularity}
 Let $u\in C(B_1)$ be a viscosity solution to \eqref{eq_main}. Suppose $F$ is a $(\lambda,\Lambda)$-uniformly elliptic operator and $f\in L^\infty(\Omega)\cap C(\overline\Omega)$. Suppose also $\sigma_i$ are monotone increasing. Suppose further that $\sigma_i^{-1}$ is a Dini-continuous modulus of continuity. Then $u \in C^1_{{\rm loc}}(B_1)$. Moreover, there exists a modulus of continuity $\omega: \mathbb{R}_0^+ \to \mathbb{R}_0^+$ depending only upon the dimension, ellipticity constants, $\sigma_i$, $\|u\|_{L^\infty(B_1)}$, and $\|f\|_{L^\infty(B_1)}$ such that
\[ |Du(x) - Du(y)| \leq \omega(|x - y|), \]
for every $x, y \in B_{1/4}$.
\end{theorem}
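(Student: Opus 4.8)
The plan is to adapt to the two-phase setting the strategy of \cite{AndPelPimTei2022} for the single degenerate equation \eqref{eq_clea}, using the two-phase device of \cite{HuaPimRamSwi2021} to make the free interface invisible; throughout, $\mathcal{M}^{\pm}_{\lambda,\Lambda}$ denote the Pucci extremal operators and we normalise $F(0)=0$. \emph{Step 1 (reduction and Lipschitz continuity).} Via the scaling $u\mapsto u(x_0+r\,\cdot)/\kappa$, which turns $\sigma_i(t)$ into $\widehat{\sigma}_i(t):=(r^2/\kappa)\,\sigma_i(\kappa t/r)$---a transformation preserving monotonicity, the ordering $\widehat{\sigma}_2\le\widehat{\sigma}_1$ and the Dini character of $\widehat{\sigma}_2^{-1}$---one reduces matters to the case $\|u\|_{L^\infty(B_1)}\le1$ with $\|f\|_{L^\infty(B_1)}$ as small as desired. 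Since $\sigma_2\le\sigma_1$, in $\{u>0\}$ one has $|F(D^2u)|\le\|f\|_{L^\infty}/\sigma_1(|Du|)\le\|f\|_{L^\infty}/\sigma_2(|Du|)$, while in $\{u<0\}$ one has $|F(D^2u)|\le\|f\|_{L^\infty}/\sigma_2(|Du|)$; patching across $\{u=0\}$ by the standard perturbation argument of the free transmission literature \cite{HuaPimRamSwi2021}, $u$ is a viscosity solution in $B_1$ of
\[
\mathcal{M}^{-}_{\lambda,\Lambda}(D^2u)\le\frac{\|f\|_{L^\infty(B_1)}}{\sigma_2(|Du|)}, \qquad \mathcal{M}^{+}_{\lambda,\Lambda}(D^2u)\ge-\frac{\|f\|_{L^\infty(B_1)}}{\sigma_2(|Du|)}.
\]
Feeding this two-sided degenerate relation into the degenerate Krylov--Safonov and Ishii--Lions machinery (cf.\ \cite{Imbert-Silvestre2013,Birindelli-Demengel2014}), together with the hypothesis that $\sigma_2^{-1}$ is a modulus of continuity, yields $u\in C^{0,1}_{\mathrm{loc}}(B_1)$ with $\|Du\|_{L^\infty(B_{1/2})}\le L$, depending only on the data; this is also what underlies the Lipschitz remark following Theorem \ref{theo_existence}.

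\medskip

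\noindent\emph{Step 2 (a dichotomy, and a one-step decay for the oscillation of the gradient).} Fix $x_0\in B_{1/2}$ and a universal $\rho\in(0,1/2)$. We construct affine maps $\ell_k(x)=a_k+b_k\cdot(x-x_0)$ with $|b_k|\le2L$ such that
\[
\osc_{B_{\rho^k}(x_0)}(u-\ell_k)\le\rho^k\eta_k, \qquad |b_{k+1}-b_k|\le C\eta_k, \qquad \eta_{k+1}\le\tfrac12\eta_k+C\rho^{k}+C\sigma_2^{-1}(C\rho^{k}),
\]
starting from $\ell_0\equiv u(x_0)$, $\eta_0=2L$. The inductive step splits according to the size of $|b_k|$. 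In the \emph{non-degenerate regime}, when $|b_k|$ is large relative to the current scale, the argument for gradient-degenerate equations of \cite{Imbert-Silvestre2013,Araujo-Ricarte-Teixeira2015} shows that on $B_{\rho^k}(x_0)$ the equation may be treated as \emph{uniformly elliptic}---the degeneracy being frozen near the positive number $\sigma_2(|b_k|/2)$---so that the rescaled function $y\mapsto\big(u(x_0+\rho^k y)-\ell_k(x_0+\rho^k y)\big)/\rho^{k}$ solves $F(D^2w)=g$ with $\|g\|_{L^\infty(B_1)}\le C\rho^{k}$; here it is essential that, although $u$ obeys two different equations according to its sign, both share the operator $F$ and both right-hand sides are dominated by $\|f\|_{L^\infty}/\sigma_2(|Du|)$, so the free interface contributes nothing to this step. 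Interior $C^{1,\alpha_0}$ estimates for $F$ then furnish $\ell_{k+1}$ with $\eta_{k+1}\le\tfrac12\eta_k+C\rho^{k}$. In the \emph{degenerate regime}, when $|b_k|$ is small, one cannot linearise; one instead compares $u$ on the annulus $B_{\rho^k}(x_0)\setminus B_{\rho^{k+1}}(x_0)$ with radial sub- and super-solutions of $\sigma_2(|Dv|)\mathcal{M}^{\mp}_{\lambda,\Lambda}(D^2v)=\pm\|f\|_{L^\infty}$, whose departure from flatness is governed precisely by $\sigma_2^{-1}(C\rho^{k})$, obtaining $\eta_{k+1}\le\tfrac12\eta_k+C\sigma_2^{-1}(C\rho^{k})$ with $\ell_{k+1}=\ell_k$. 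Because $\sigma_2\le\sigma_1$, the rate $\sigma_2$ is the more degenerate of the two, which is exactly why it is $\sigma_2^{-1}$---and not $\sigma_1^{-1}$---that is required to be a Dini modulus of continuity.

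\medskip

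\noindent\emph{Step 3 (summation and conclusion).} Iterating the recursion of Step 2,
\[
\sum_{k\ge0}\eta_k\le C\Big(L+\sum_{k\ge0}\rho^{k}+\sum_{k\ge0}\sigma_2^{-1}(C\rho^{k})\Big)\le C\Big(L+1+\int_0^{C}\sigma_2^{-1}(t)\,\frac{dt}{t}\Big)<\infty,
\]
the last bound being the Dini condition on $\sigma_2^{-1}$. Hence $b_k$ converges to some $\xi(x_0)$ with $|\xi(x_0)-b_k|\le C\sum_{j\ge k}\eta_j$, the function $u$ is differentiable at $x_0$ with $Du(x_0)=\xi(x_0)$, and, comparing dyadic scales,
\[
\sup_{B_r(x_0)}\big|u(x)-u(x_0)-\xi(x_0)\cdot(x-x_0)\big|\le r\,\widehat{\omega}(r), \qquad \widehat{\omega}(r)\asymp r^{\alpha_0}+\int_0^{r}\sigma_2^{-1}(t)\,\frac{dt}{t}.
\]
Running this at every point of $B_{1/4}$ and undoing the normalisation of Step 1 gives $u\in C^1_{\mathrm{loc}}(B_1)$ together with $|Du(x)-Du(y)|\le\omega(|x-y|)$ on $B_{1/4}$, where $\omega$ depends only on $d$, $\lambda$, $\Lambda$, $\sigma_i$, $\|u\|_{L^\infty(B_1)}$ and $\|f\|_{L^\infty(B_1)}$.

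\medskip

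\noindent\emph{Main obstacle.} The delicate point is the degenerate regime of Step 2: establishing, uniformly in $k$ and up to the free interface $\{u=0\}$, a one-step flatness gain with the sharp rate $\sigma_2^{-1}(C\rho^{k})$. This demands barriers adapted to $\sigma_2$---whence the reduction of Step 1 to a single $\sigma_2$-degenerate Pucci inequality is used repeatedly---and it requires circumventing the circularity inherent in the statement ``the gradient is large, hence the equation is uniformly elliptic'', the gradient bound being itself part of what is being proven, exactly as in the single-phase theory of \cite{Imbert-Silvestre2013,Araujo-Ricarte-Teixeira2015,AndPelPimTei2022}. The non-homogeneity of $\sigma_1$ and $\sigma_2$ is a further source of bookkeeping, since no self-similar normalisation is available and the scaling parameters must be propagated explicitly through the iteration.
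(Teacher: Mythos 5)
Your proposal takes a genuinely different route from the paper, but the decisive step is left unproven and is not an easy fill.

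\textbf{Where the two approaches diverge.} Your Step~1 collapses the two-phase structure into a single $\sigma_2$-degenerate Pucci inequality (legitimate, since $\sigma_2\le\sigma_1$ implies $1/\sigma_1\le1/\sigma_2$), and then runs a Caffarelli-style iteration with a dichotomy \emph{in $|b_k|$}: non-degenerate regime (linearise and use interior $C^{1,\alpha_0}$ for $F$) versus degenerate regime (compare with radial barriers). The paper does not reduce to a single inequality; it keeps the min/max formulation with \emph{both} $\sigma_1$ and $\sigma_2$ (equations \eqref{eq_minq}--\eqref{eq_maxq}), proves a compactness-based Approximation Lemma (Proposition~\ref{prop_approximation}) by contradiction, and then drives the iteration not with fixed geometric scales $\rho^k$ but with a tailored amplitude sequence $(\mu_k)$ normalised so that the rescaled degeneracy $\sigma_2^k$ stays comparable to $1$. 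The dichotomy in the paper is at the level of $(\mu_k)$ (it either eventually stabilises, giving $C^{1,\alpha}$, or the normalisation forces $\prod\mu_i\le\sigma_2^{-1}(\theta^n)/c_n$), and the Dini hypothesis enters through Proposition~\ref{prop_modulator} to show $(\prod_{i\le n}\mu_i)_n\in\ell^1$. So the bookkeeping and the mechanism of summability are structurally different from yours.

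\textbf{The gap.} You flag it yourself: the one-step flatness gain $\eta_{k+1}\le\frac12\eta_k+C\sigma_2^{-1}(C\rho^k)$ with $\ell_{k+1}=\ell_k$ in the degenerate regime is asserted via unconstructed radial barriers, and there is no argument that the departure of such barriers from a plane is controlled exactly by $\sigma_2^{-1}(C\rho^k)$. For a general (non-power) degeneracy this is not a routine computation, and it is precisely the point the paper's approximation-plus-scaling machinery is designed to avoid: the paper never builds barriers, it instead exploits uniform ellipticity through compactness (Steps~3--5 of Proposition~\ref{prop_approximation} carefully remove the case $|\mathrm{b}+q_\infty|=0$) and absorbs the degeneracy into the choice of $(\mu_k)$. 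Two further, smaller issues: (i) your Step~1 claims local Lipschitz continuity as an input via ``degenerate Krylov--Safonov and Ishii--Lions machinery''; the paper only establishes (and only needs) H\"older continuity (Proposition~\ref{prop_holdercont1}), and Lipschitz regularity for a general $\sigma_2$ is not established there (it is discussed only conjecturally in Remark~\ref{rem_lip}); (ii) your iteration at fixed scales $\rho^k$ requires the approximating $C^{1,\alpha_0}$ function to be found at every scale, but the compactness argument that produces it (your appeal to $C^{1,\alpha_0}$ interior estimates for $F$) is only valid after one has already frozen the degeneracy, which is again the point in question. Until the degenerate-regime estimate is actually proven, the proof is incomplete.
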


The proof of Theorem \ref{theo_regularity} relies on several ingredients. First, we notice that solutions to \eqref{eq_main} satisfy two viscosity inequalities in the entire domain $B_1$. Then we resort to standard approximation methods to construct hyperplanes locally comparable with the solution, except for a prescribed error. Finally, the summability due to the Dini continuity of $\sigma_i^{-1}$ ensures the convergence of such hyperplanes and concludes the proof. A by-product of our argument is an explicit modulus of continuity for $Du$.

The remainder of this paper is organised as follows. Section \ref{sec_prelim} details our main assumptions and gathers definitions and former results used in the paper. The proof of Theorem \ref{theo_regularity} is the subject of Section \ref{sec_regularity}.

\section{Preliminaries}\label{sec_prelim}

In this section, we gather preliminary results and notions used throughout the paper. We start by stating our main assumptions.
\begin{Assumption}[Uniform ellipticity]\label{assump_Felliptic}
Fix $0<\lambda\leq\Lambda$. We suppose that the operator $F:S(d)\to\mathbb{R}$ is $(\lambda,\Lambda)$-elliptic. That is, for every $M,\,N\in S(d)$ we have
\[
    \lambda\left\|N\right\|\leq F(M+N)-F(M)\leq \Lambda\left\|N\right\|,
\]
provided $N\geq 0$.
\end{Assumption}

We continue with an assumption on the degeneracy laws.

\begin{Assumption}[Degeneracy rates]\label{assump_sigmabasic}
The degeneracy rates $\sigma_1,\,\sigma_2:\mathbb{R}_+\to\mathbb{R}_+$ are continuous, monotone increasing, with
\[
	\lim_{t\to0}\sigma_i(t)=0,
\]
for $i=1,2$. In addition, they have inverses $\sigma_1^{-1}$ and $\sigma_2^{-1}$ that are Dini-continuous moduli of continuity.
\end{Assumption}

A typical example of degeneracy laws satisfying Assumptions \ref{assump_sigmabasic} is
\[
    \sigma_1(t)\coloneqq t^{p_1}\hspace{.3in}\mbox{and}\hspace{.3in}\sigma_2(t)\coloneqq t^{p_2},
\]
Also, $\sigma_i^{-1}(t)=t^{1/p_i}$, which is H\"older continuous with exponent $1/p_i$. In what follows, we gather auxiliary results used in the paper. We continue with the notion of viscosity solutions.

\begin{definition}[Viscosity solution]\label{def_cvisc}
Let $G:S(d)\times\mathbb{R}^d\times\mathbb{R}\times\Omega\to\mathbb{R}$ be a degenerate elliptic operator. We say that $u\in{\rm USC}(\Omega)$ {\rm [resp. }$u\in{\rm LSC}(\Omega)${\rm ]} is a viscosity sub-solution {\rm [resp. }super-solution{\rm ]} to 
\begin{equation}\label{eq_visc}
G(D^2u,Du,u,x)=0\hspace{.3in}\mbox{in}\hspace{.3in}\Omega
\end{equation}
if, whenever $\varphi\in C^2(\Omega)$ and $u-\varphi$ attains a maximum {\rm [resp. }minimum{\rm ]} at $x_0\in\Omega$, we have
\[
    \begin{split}
        G(&D^2\varphi(x_0),D\varphi(x_0),u(x_0),x_0)\leq 0\\
            {\rm [resp. \;}&G(D^2\varphi(x_0),D\varphi(x_0),u(x_0),x_0)\geq 0{\rm ]}.
    \end{split}
\]
If $u\in C(\overline{\Omega})$ is both a viscosity sub-solution and a $C$-viscosity super-solution to \eqref{eq_visc}, we say that $u$ is a viscosity solution to \eqref{eq_visc}.
\end{definition}

In case a viscosity solution $u\in C(\overline\Omega)$ is such that $\left\|u\right\|_{L^\infty(\Omega)}\leq 1$, we refer to $u$ as a \emph{normalized} viscosity solution. We continue with the definition of the extremal Pucci operators. Indeed, for $0<\lambda\leq\Lambda$, define $\mathcal{A}_{\lambda,\Lambda}\subset S(d)$ as
\[
	\mathcal{A}_{\lambda,\Lambda}\coloneqq \left\lbrace A\in S(d)\;|\;\lambda|\xi|^2\leq A\xi\cdot\xi\leq\Lambda|\xi|^2\hspace{.1in}\mbox{for every}\hspace{.1in}\xi\in\mathbb{R}^d\right\rbrace.
\]
The extremal operators are defined as follows. 

\begin{definition}[Extremal Pucci operators]\label{def_pucci}
Let $0<\lambda\leq\Lambda$ be fixed, though arbitrary. The extremal Pucci operator $\mathcal{M}^-_{\lambda,\Lambda}:S(d)\to\mathbb{R}$ is given by
\[
	\mathcal{M}^-_{\lambda,\Lambda}(M)\coloneqq \inf_{A\in\mathcal{A}_{\lambda,\Lambda}}\,{\rm Tr}(AM).
\]
Also, define $\mathcal{M}^+_{\lambda,\Lambda}(M)\coloneqq -\mathcal{M}^-_{\lambda,\Lambda}(-M)$.
\end{definition}

Among other things, the extremal operators are useful in defining uniform ellipticity. Indeed, an operator $F:S(d)\to\mathbb{R}$ satisfies Assumption \ref{assump_Felliptic} if, for any $M,\,N\in S(d)$, we have
\[
    \mathcal{M}_{\lambda,\Lambda}^-(M-N)\leq F(M)-F(N)\leq \mathcal{M}_{\lambda,\Lambda}^+(M-N).
\]

We continue by recalling auxiliary results used to produce compactness in our context.

\begin{proposition}[Maximum principle]\label{prop_jil}
    Let $H,G\in C(S(d)\times\mathbb{R}^d\times\Omega)$ be degenerate elliptic operators. Let $u\in{\rm USC}(\Omega)$ be a $C$-viscosity sub-solution to $G(D^2u,Du,x)=0$ in $\Omega$ and $v\in{\rm LSC}(\Omega)$ be a $C$-viscosity super-solution to $H(D^2v,Dv,x)=0$ in $\Omega$. Define $w:\Omega\times\Omega\to\mathbb{R}$ as
    \[
        w(x,y)\coloneqq u(x)-v(y).
    \]
    Let $\Psi\in C^2(\Omega\times\Omega)$ and suppose $(\overline x,\overline y)\in\Omega\times\Omega$ is a local maximum point for $w-\Psi$. For every $\varepsilon>0$ there exist $X,\,Y\in S(d)$ such that 
    \[
        G(X,D_x\Psi(\overline x,\overline y),\overline x)\leq 0\leq H(Y,D_y\Psi(\overline x,\overline y),\overline y). 
    \]
    In addition,
\[
-\left(\frac{1}{\varepsilon}+\|D^2\Psi(\overline x,\overline y)\|\right)I\,\leq\,
\left(
\begin{array}{ccc}
X   & 0 \\
0  &-Y 
\end{array}
\right)
\leq 
D^2\Psi(\overline x,\overline y)+\varepsilon \left[D^2\Psi(\overline x,\overline y)\right]^2.
\]
\end{proposition}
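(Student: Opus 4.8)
The plan is to recognize Proposition \ref{prop_jil} as the classical \emph{theorem on sums} (Jensen--Ishii lemma) and to establish it by the sup/inf-convolution route. First I would localize: since $(\overline x,\overline y)$ is a local maximum of $w-\Psi$, replacing $\Psi$ by $\Psi(x,y)+|x-\overline x|^4+|y-\overline y|^4$ — which alters neither $D\Psi$ nor $D^2\Psi$ at $(\overline x,\overline y)$ — I may assume the maximum is strict on a small closed ball $\overline Q\subset\Omega\times\Omega$ centred at $(\overline x,\overline y)$, with $w-\Psi$ strictly smaller on $\partial Q$. Next I would regularize: for $\de>0$ let $u^\de$ be the sup-convolution of $u$ and $v_\de$ the inf-convolution of $v$, so that $u^\de$ is semiconvex, $v_\de$ is semiconcave, both converge uniformly to $u,v$, and the ``magic property'' of sup/inf-convolutions transfers the viscosity inequalities, namely that wherever $u^\de$ is touched from above by a paraboloid the inequality $G(\cdot)\le 0$ holds at a base point converging to the touching point with identical second-order data, and symmetrically for $v_\de$ and $H$. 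The function $\Phi_\de(x,y):=u^\de(x)-v_\de(y)-\Psi(x,y)$ is then semiconvex on $Q$ and, for $\de$ small, attains an interior maximum at $(x_\de,y_\de)\to(\overline x,\overline y)$.

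Then I would apply Jensen's lemma together with Alexandrov's theorem on twice-differentiability of semiconvex functions: for each $\eta>0$ there is a covector $(p,q)$ with $|(p,q)|<\eta$ such that $\Phi_\de(x,y)-p\cdot x-q\cdot y$ has a local maximum at some $(\hat x,\hat y)$ near $(x_\de,y_\de)$ where $u^\de$ and $v_\de$ both possess second-order Taylor expansions. Reading off the Hessians gives $X_\de$ in the second superjet of $u^\de$ at $\hat x$ and $Y_\de$ in the second subjet of $v_\de$ at $\hat y$; the second-order maximum condition for a semiconvex function, combined with the linear algebra underlying Jensen's lemma, yields
\[
-\left(\frac{1}{\ep}+\|D^2\Psi(\hat x,\hat y)\|\right)I\le
\begin{pmatrix} X_\de & 0\\ 0 & -Y_\de\end{pmatrix}
\le D^2\Psi(\hat x,\hat y)+\ep\left[D^2\Psi(\hat x,\hat y)\right]^2+o(1),
\]
while the magic property transfers the equations as $G(X_\de,D_x\Psi(\hat x,\hat y)+p,\hat x)\le0\le H(Y_\de,D_y\Psi(\hat x,\hat y)+q,\hat y)$.

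Finally I would pass to the limit. The two-sided matrix bound keeps $\{X_\de\},\{Y_\de\}$ bounded in $S(d)$, so along a subsequence $X_\de\to X$ and $Y_\de\to Y$; letting $\de\to0$ and then $\eta\to0$, the continuity of $G$ and $H$ delivers the asserted inequalities at $(\overline x,\overline y)$ with the base data $D_x\Psi(\overline x,\overline y)$, $D_y\Psi(\overline x,\overline y)$, and the matrix sandwich passes to the limit since the $o(1)$ term vanishes. The main obstacle is the regularization bookkeeping — verifying the magic property of sup/inf-convolutions (that they solve the frozen equation up to a controlled shift of the base point) and coupling Jensen's lemma with Alexandrov's theorem to produce points of twice-differentiability arbitrarily close to the maximum at the cost of an arbitrarily small gradient correction — everything after that being compactness and continuity. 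Alternatively, one may simply invoke the statement from the standard references on the maximum principle for viscosity solutions, after noting that $G$ and $H$ are continuous and degenerate elliptic by hypothesis.
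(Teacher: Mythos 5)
Your proposal is correct, and it takes essentially the same route as the paper: the paper does not prove Proposition~\ref{prop_jil} at all, but simply identifies it as the Crandall--Ishii theorem on sums and refers to Theorem~3.2 of the Crandall--Ishii--Lions \emph{User's Guide}, which is exactly the ``invoke the standard reference'' option you close with. Your sketch of the underlying argument (strict localization via a quartic perturbation, sup/inf-convolution regularization, Jensen's lemma plus Alexandrov's theorem to produce twice-differentiable touching points, the magic property to transfer the viscosity inequalities, and a compactness/continuity passage to the limit) is a faithful outline of the classical proof of that cited theorem.
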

For a proof of Proposition \ref{prop_jil}, we refer the reader to \cite[Theorem 3.2]{CraIshLio1992}. We continue with a variant of a regularity result for equations holding \emph{only where the gradient is large}. See \cite{ImbSil2016}; see also \cite{Moo2015}.

\begin{proposition}[H\"older regularity]\label{prop_holder1}
    Fix $\gamma>0$. Let $u\in C(\overline{\Omega})$ be a normalized viscosity solution to 
    \[
        \begin{split}
            \mathcal{M}_{\lambda,\Lambda}^-(D^2u)\leq C_0&\hspace{.2in}\mbox{in}\hspace{.2in}\Omega\cap\left\lbrace|Du|\geq \gamma \right\rbrace\\
            \mathcal{M}_{\lambda,\Lambda}^+(D^2u)\geq -C_0&\hspace{.2in}\mbox{in}\hspace{.2in}\Omega\cap\left\lbrace|Du|\geq \gamma \right\rbrace.
        \end{split}
    \]
    There exists $\beta\in(0,1)$ such that $u\in C^\beta_{\rm loc}(\Omega)$. Moreover, for $\Omega'\Subset\Omega$ there exists $C>0$ such that
    \[
        \left\|u\right\|_{C^\beta(\Omega')}\leq CC_0.
    \]
Finally, $\beta=\beta(\lambda,\Lambda,d)$ and $C=C(\lambda,\Lambda,d,\gamma,{\rm diam}(\Omega),{\rm dist}(\Omega',\partial\Omega))$.

\end{proposition}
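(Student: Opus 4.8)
The plan is to follow the strategy of Imbert--Silvestre for equations that hold only where the gradient is large. Since the two viscosity inequalities place $u$ in the Pucci class $S(\lambda,\Lambda,C_0)$ on the open set $\{|Du|>\gamma\}$, it suffices to establish a Harnack-type inequality valid \emph{above the scale} $\gamma$ and then iterate it with an appropriate rescaling. Concretely, I would organise the argument along the usual chain: Aleksandrov--Bakelman--Pucci (ABP) estimate $\Rightarrow$ measure/point estimate for supersolutions $\Rightarrow$ weak Harnack $\Rightarrow$ Harnack $\Rightarrow$ oscillation decay $\Rightarrow$ $C^\beta$, with the caveat that, at every scale, the conclusions carry an additive error proportional to $\gamma$ (times the scale) and to $C_0$.

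The heart of the matter is an ABP estimate adapted to the degeneracy. Given $u$ with $u\le 0$ on $\partial B_1$ and $\mathcal{M}^-_{\lambda,\Lambda}(D^2u)\le C_0$ in $\{|Du|>\gamma\}\cap B_1$, I consider the concave envelope $\Gamma$ of $u^+$ in $B_1$. On the upper contact set $\mathcal{C}:=\{u=\Gamma\}$ one has $D^2\Gamma\le 0$ a.e., and the gradient map $D\Gamma$ sends $\mathcal{C}$ onto a set containing $B_{c\sup_{B_1}u}$. Now split $\mathcal{C}=\mathcal{C}_{>\gamma}\cup\mathcal{C}_{\le\gamma}$ according to whether $|D\Gamma|>\gamma$ or not. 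On $\mathcal{C}_{>\gamma}$ the equation is available, so $0\le -D^2\Gamma\le (C_0/\lambda)\,\Id$ a.e.\ and hence $|\det D^2\Gamma|\le (C_0/\lambda)^d$; on the complementary part, $D\Gamma(\mathcal{C}_{\le\gamma})\subset B_\gamma$. Applying the area formula to $D\Gamma$ yields $|B_{c\sup u}|\le |B_\gamma|+C(C_0)^d|B_1|$, whence $\sup_{B_1}u\le \sup_{\partial B_1}u^+ + C\big(\gamma+C_0\big)$; Proposition~\ref{prop_jil} and its classical corollaries supply the pointwise information on $\Gamma$ at contact points needed to justify these steps. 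The analogous lower bound for supersolutions follows by symmetry.

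From this ABP inequality I would run the Caffarelli--Cabré machinery in the large-gradient regime: a barrier subsolution of the Pucci equation, corrected by the $\gamma$-term, produces the basic measure estimate $|\{u>t\}\cap B_{1/4}|\le\theta<1$ whenever $u\ge 0$ is a supersolution with $\inf_{B_{1/4}}u\le 1$ and $t\ge M_0(\gamma+C_0)$; a Calder\'on--Zygmund cube decomposition upgrades this to $|\{u>t\}\cap B_{1/4}|\le Ct^{-\varepsilon}$ for all $t\ge M_0(\gamma+C_0)$. Combining with the dual estimate for subsolutions gives the Harnack inequality $\sup_{B_{1/4}}u\le C\big(\inf_{B_{1/4}}u+\gamma+C_0\big)$, and applying it to $\sup_{B_{1/2}}u-u$ and to $u-\inf_{B_{1/2}}u$ yields the one-step oscillation decay $\osc_{B_{1/2}}u\le(1-\mu)\,\osc_{B_1}u+C(\gamma+C_0)$.

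Finally, I would iterate this on dyadic balls. Rescaling $B_{2^{-k}}$ to $B_1$ by $\tilde u(x)=u(2^{-k}x)$ produces a map in $S(\lambda,\Lambda,4^{-k}C_0)$ on $\{|D\tilde u|>2^{-k}\gamma\}$, so the one-step estimate gives $\osc_{B_{2^{-(k+1)}}}u\le(1-\mu)\osc_{B_{2^{-k}}}u+C\big(2^{-k}\gamma+4^{-k}C_0\big)$; since the error terms decay faster than the oscillation factor, summation leaves $\osc_{B_r}u\le Cr^\beta\big(\|u\|_{L^\infty(B_1)}+\gamma+C_0\big)$ for $0<r\le 1/2$, with $\beta=\beta(\lambda,\Lambda,d)$ determined by $\mu$. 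A standard covering argument then yields the interior estimate on any $\Omega'\Subset\Omega$, with $C$ depending on $\gamma$, $\mathrm{diam}(\Omega)$ and $\mathrm{dist}(\Omega',\partial\Omega)$, and the linear dependence on $C_0$ is read off by homogeneity. The main obstacle is the ABP/measure-estimate step: one must verify that the portion of the contact set (resp.\ of the relevant super-level sets) on which the gradient falls below $\gamma$ contributes only a $\gamma$-sized error, and then carry this $\gamma$-floor carefully through the rescaling so that it does not obstruct the geometric decay of the oscillation.
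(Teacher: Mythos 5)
The paper does not actually prove Proposition~\ref{prop_holder1}: it is quoted as a known variant of the regularity theory for equations holding only where the gradient is large, with the proof deferred to \cite{ImbSil2016} (see also \cite{Moo2015}). Your plan is therefore an attempt to reprove that theorem, and its architecture --- ABP with a $\gamma$-corrected error, basic measure estimate, weak Harnack, oscillation decay, dyadic iteration in which the rescaled threshold $2^{-k}\gamma$ and source $4^{-k}C_0$ decay fast enough not to obstruct the geometric decay --- is indeed the architecture of those papers; your scaling bookkeeping at the end is correct, and your final bound $Cr^{\beta}\bigl(\|u\|_{L^\infty}+\gamma+C_0\bigr)$ is the form the literature gives (the statement's ``$CC_0$'' should be read this way). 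One small slip: the $\sup$ bound via the concave envelope of $u^+$ tests $u$ with functions touching from above, so the inequality you need there is $\mathcal{M}^+_{\lambda,\Lambda}(D^2u)\geq -C_0$, not $\mathcal{M}^-_{\lambda,\Lambda}(D^2u)\leq C_0$; harmless, since both are assumed.

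The genuine gap is the step you yourself flag as ``the main obstacle'', and it is not a routine verification. The contact-set splitting does work for the plain ABP bound, because there the bad contact points (those with $|D\Gamma|\leq\gamma$) have gradient image inside $B_\gamma$, a $\gamma$-sized error. It does not work for the barrier-based measure estimate: at a contact point of $w=u+\varphi$ the covered gradient is $Du(x)+D\varphi(x)$, and the information $|Du(x)|\leq\gamma$ only places it within distance $\gamma$ of $D\varphi(x)$ for an unknown contact point $x$. Any admissible bump barrier must have small gradient near its minimum, so $D\varphi$ sweeps a full ball of universal radius; consequently the ``bad'' contact points can a priori cover the whole gradient ball produced by the ABP argument, and their contribution is \emph{not} $O(\gamma^d)$ --- no choice of a single smooth barrier avoids this. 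This is precisely why \cite{ImbSil2016} is a nontrivial paper: the basic measure estimate there is obtained by a different mechanism (sliding families of paraboloid-type barriers corrected by a cusp at the vertex, so that contact points with small gradient are excluded or localised, together with a careful covering/iteration argument), rather than by the Caffarelli--Cabr\'e barrier computation with a $\gamma$-correction. As written, your proposal replaces this crux with an assertion; either carry out that argument (essentially reproducing \cite{ImbSil2016}) or, as the paper does, simply invoke the quoted results.
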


We close this section with a proposition relating the sequence spaces $\ell_1(\mathbb{R}^d)$ and $c_0(\mathbb{R}^d)$. See \cite[Lemma 1]{AndPelPimTei2022}.
\begin{proposition}\label{prop_modulator}
	Let $(a_j)_{j \in \mathbb{N}} \in \ell^1$ and take $\varepsilon,\delta > 0$, arbitrary. There exists a sequence $(c_j)_{j \in \mathbb{N}} \in c_0$, with $\max_{j \in \mathbb{N}} |c_j| \leq \varepsilon^{-1}$, such that 
	\[
	\left( \frac{a_j}{c_j} \right)_{j \in \mathbb{N}}\in \ell^1
	\]
	and 
	\[
	\varepsilon\left(1-\frac{\delta}{2}\right)\left\| (a_j) \right\|_{\ell_1} \leq \left\| \left( \frac{a_j}{c_j} \right)\ \right\|_{\ell_1} \leq \varepsilon(1 + \delta) \left\| (a_j) \right\|_{\ell_1}.
	\]
\end{proposition}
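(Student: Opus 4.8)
The plan is to reduce the whole statement to the choice of a single cutoff index $N$. The starting point is the observation that the lower bound comes for free: since $|c_j|\le\varepsilon^{-1}$ forces $|a_j/c_j|\ge\varepsilon|a_j|$ for every $j$, one gets $\|(a_j/c_j)\|_{\ell_1}\ge\varepsilon\|(a_j)\|_{\ell_1}\ge\varepsilon(1-\tfrac{\delta}{2})\|(a_j)\|_{\ell_1}$ as soon as $(a_j/c_j)$ is summable. So the entire content is to build $(c_j)\in c_0$ with $\max_j|c_j|\le\varepsilon^{-1}$, $(a_j/c_j)\in\ell^1$, and $\|(a_j/c_j)\|_{\ell_1}$ not overshooting $\varepsilon(1+\delta)\|(a_j)\|_{\ell_1}$.

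First I would dispose of the trivial cases. If $\|(a_j)\|_{\ell_1}=0$ or $(a_j)$ has finite support, take $c_j:=\varepsilon^{-1}$ on the (finite) support and $c_j:=\varepsilon^{-1}/(j+1)$ elsewhere; then $(a_j/c_j)$ is a finitely supported sequence of $\ell_1$-norm exactly $\varepsilon\|(a_j)\|_{\ell_1}$, $c_j\to0$, and $\max_j|c_j|=\varepsilon^{-1}$, so all requirements hold. Hence assume $(a_j)\in\ell^1$ has infinite support, and introduce the tails $r_j:=\sum_{k\ge j}|a_k|$, so that $r_j>0$ for all $j$, $r_j\downarrow 0$, and $|a_j|=r_j-r_{j+1}$.

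Now fix $N$ and set $c_j:=\varepsilon^{-1}$ for $j\le N$ and $c_j:=\varepsilon^{-1}\sqrt{r_j/r_{N+1}}$ for $j>N$. Then $|c_j|\le\varepsilon^{-1}$ everywhere (because $r_j\le r_{N+1}$ when $j>N$), with $\max_j|c_j|=\varepsilon^{-1}$, and $c_j\to 0$ since $r_j\to0$ (the finite constant block $j\le N$ does not affect membership in $c_0$). For the norm, split
\[
\sum_j\frac{|a_j|}{|c_j|}=\varepsilon\sum_{j\le N}|a_j|+\varepsilon\sqrt{r_{N+1}}\sum_{j>N}\frac{r_j-r_{j+1}}{\sqrt{r_j}}.
\]
Rationalising $\sqrt{r_j}-\sqrt{r_{j+1}}$ and using $r_{j+1}\le r_j$ gives the elementary bound $\frac{r_j-r_{j+1}}{\sqrt{r_j}}\le 2\bigl(\sqrt{r_j}-\sqrt{r_{j+1}}\bigr)$, so the tail sum telescopes to at most $2\sqrt{r_{N+1}}$, whence $\sum_{j>N}|a_j|/|c_j|\le 2\varepsilon r_{N+1}$ and $\|(a_j/c_j)\|_{\ell_1}\le\varepsilon\|(a_j)\|_{\ell_1}+2\varepsilon r_{N+1}$. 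In particular $(a_j/c_j)\in\ell^1$. Since $r_{N+1}\to0$, choosing $N$ with $r_{N+1}\le\tfrac{\delta}{2}\|(a_j)\|_{\ell_1}$ yields the upper bound $\varepsilon(1+\delta)\|(a_j)\|_{\ell_1}$, and combined with the automatic lower bound this finishes the proof.

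I do not anticipate a real obstacle; the only delicate point is calibrating the tail so that $c_j\to0$, $|c_j|\le\varepsilon^{-1}$, and $\sum_{j>N}|a_j|/|c_j|$ are controlled simultaneously, and the scaling $c_j\propto\sqrt{r_j}$ together with the telescoping inequality achieves all three at once, with $N$ then fixed purely by smallness of the tail. The remaining care is the bookkeeping in the finite-support case, which is routine.
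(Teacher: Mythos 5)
Your proof is correct: the lower bound is indeed automatic from $|c_j|\le\varepsilon^{-1}$, and your weighted-tail construction $c_j=\varepsilon^{-1}\sqrt{r_j/r_{N+1}}$ with the telescoping estimate $\frac{r_j-r_{j+1}}{\sqrt{r_j}}\le 2\bigl(\sqrt{r_j}-\sqrt{r_{j+1}}\bigr)$ gives $\sum_{j>N}|a_j|/|c_j|\le 2\varepsilon r_{N+1}$, so choosing $N$ with $r_{N+1}\le\frac{\delta}{2}\|(a_j)\|_{\ell_1}$ yields exactly the stated two-sided bound, and the degenerate cases (zero or finitely supported $(a_j)$) are handled cleanly. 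Note that the paper itself does not prove this proposition but cites it from an earlier work (Lemma 1 of the reference given there); your argument is the standard self-contained one for such statements, quantified so as to keep the $\ell_1$-norm within the prescribed factors $\varepsilon(1-\delta/2)$ and $\varepsilon(1+\delta)$, and it can stand on its own as a complete proof.
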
 

\section{Local $C^1$-regularity estimates}\label{sec_regularity}

In this section, we detail the proof of Theorem \ref{theo_regularity}. Our strategy is based on a technique introduced in \cite{PimSwi2022,HuaPimRamSwi2021}, relating \eqref{eq_main} with a pair of viscosity inequalities holding in the \emph{entire} domain. Indeed, if $u\in C(\overline\Omega)$ is a viscosity solution to \eqref{eq_main}, it solves
\begin{equation*}\label{eq_viscmin}
    \min\left(\sigma_1(|Du|)F(D^2u),\sigma_2(|Du|)F(D^2u)\right)\leq C_0\hspace{.2in}\mbox{in}\hspace{.2in}\Omega
\end{equation*}
and
\begin{equation*}\label{eq_viscmax}
    \max\left(\sigma_1(|Du|)F(D^2u),\sigma_2(|Du|)F(D^2u)\right)\geq -C_0\hspace{.2in}\mbox{in}\hspace{.2in}\Omega.
\end{equation*}
In the sequel, we prove that viscosity solutions to \eqref{eq_main} are locally H\"older continuous, with estimates. For simplicity, and without loss of generality, we set $\Omega\equiv B_1$. The next result is a direct consequence of Proposition \ref{prop_holder1}.

\begin{proposition}[H\"older continuity]\label{prop_holderq1}
    Let $u\in C(B_1)$ be a viscosity solution to
    \begin{equation}\label{eq_minq}
        \min\left(\sigma_1(|q+Du|)F(D^2u),\sigma_2(|q+Du|)F(D^2u)\right)\leq C_0\hspace{.2in}\mbox{in}\hspace{.2in}B_1
    \end{equation}
    and
    \begin{equation}\label{eq_maxq}
        \max\left(\sigma_1(|q+Du|)F(D^2u),\sigma_2(|q+Du|)F(D^2u)\right)\geq -C_0\hspace{.2in}\mbox{in}\hspace{.2in}B_1,
    \end{equation}
    where $q\in\mathbb{R}^d$ is fixed, though arbitrary. Suppose Assumptions \ref{assump_Felliptic} and \ref{assump_sigmabasic} are in force. Suppose further $|q|\leq A_0$, for some constant $A_0>1$. Then there exists $\beta\in(0,1)$ such that $u\in C^\beta_{\rm loc}(B_1)$. In addition, for every $\tau\in(0,1)$ there exists $C_\tau>0$ for which
    \[
        \left\|u\right\|_{C^\beta(B_\tau)}\leq C_\tau.
    \]
    The constant $C_\tau$ depends on $\lambda$, $\Lambda$, the dimension $d$, $C_0$ and $\sigma_i(A_0)$, for $i=1,\,2$.
\end{proposition}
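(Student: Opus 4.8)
The plan is to reduce Proposition \ref{prop_holderq1} to Proposition \ref{prop_holder1} by showing that $u$ satisfies a pair of extremal Pucci inequalities wherever its gradient is large. Fix $\gamma := A_0 + 1$. On $B_1 \cap \{|Du| > \gamma\}$ one has $|q + Du| \geq |Du| - |q| > \gamma - A_0 = 1$, so by the monotonicity of the $\sigma_i$ and the normalization $\sigma_1(1) \geq \sigma_2(1) \geq 1$ from Assumption \ref{assump_sigmabasic} one gets $\sigma_i(|q + Du|) \geq \sigma_i(1) \geq 1$ for $i = 1, 2$ --- understood, as always, in terms of the gradient of a test function touching $u$ at a point of that set.

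The core step is a sign analysis turning the one-sided inequalities \eqref{eq_minq}--\eqref{eq_maxq} into two-sided control of $F(D^2u)$. Let $\varphi \in C^2$ touch $u$ from above at $x_0$ with $|D\varphi(x_0)| > \gamma$; set $M := D^2\varphi(x_0)$, $a := \sigma_1(|q + D\varphi(x_0)|)$ and $b := \sigma_2(|q + D\varphi(x_0)|)$, so that $a \geq b \geq 1$ by the previous paragraph together with $\sigma_1 \geq \sigma_2$. From \eqref{eq_minq}, $\min(aF(M), bF(M)) \leq C_0$: if $F(M) \geq 0$ the minimum equals $bF(M) \geq F(M)$, hence $F(M) \leq C_0$; if $F(M) < 0$ then $F(M) < 0 \leq C_0$ trivially. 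In either case $F(M) \leq C_0$, and --- assuming, as is standard, $F(0) = 0$, otherwise replacing $C_0$ by $C_0 + |F(0)|$ --- we conclude $\mathcal{M}^-_{\lambda,\Lambda}(M) \leq F(M) - F(0) \leq C_0$. The mirror argument, testing from below and using \eqref{eq_maxq} (when $F(M) \leq 0$ the maximum is $bF(M)$, and $bF(M) \geq -C_0$ with $b \geq 1$ forces $F(M) \geq -C_0$), gives $\mathcal{M}^+_{\lambda,\Lambda}(M) \geq -C_0$. Hence $u$ is a viscosity sub-solution to $\mathcal{M}^-_{\lambda,\Lambda}(D^2u) \leq C_0$ and a viscosity super-solution to $\mathcal{M}^+_{\lambda,\Lambda}(D^2u) \geq -C_0$ in $B_1 \cap \{|Du| > \gamma\}$.

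It then remains to invoke Proposition \ref{prop_holder1} with this $\gamma$, $\Omega = B_1$ and $\Omega' = B_\tau$, which yields a universal exponent $\beta = \beta(d,\lambda,\Lambda) \in (0,1)$ together with $\|u\|_{C^\beta(B_\tau)} \leq C_\tau C_0$, where $C_\tau$ depends only on $d$, $\lambda$, $\Lambda$, $C_0$, $\tau$ and $\gamma = A_0 + 1$ --- equivalently, since the $\sigma_i$ are fixed increasing functions, on $\sigma_i(A_0)$. If $u$ is not normalized one first divides it by $\max(\|u\|_{L^\infty(B_1)}, 1)$ before applying Proposition \ref{prop_holder1}, which is legitimate since the Pucci operators are positively $1$-homogeneous, and then rescales back. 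I do not expect a genuine obstacle here: the only slightly delicate points are the elementary case distinction on the sign of $F(D^2u)$ --- which is exactly where the normalization $\sigma_i(1) \geq 1$ makes the min/max structure collapse to two-sided bounds --- and the bookkeeping of constants, in particular through the convention $F(0) = 0$ and the rescaling in $u$.
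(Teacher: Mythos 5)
Your proof is correct and follows essentially the same strategy as the paper: reduce the min/max viscosity inequalities to the Pucci inequalities of Proposition~\ref{prop_holder1} on the set where the gradient is large, using the bound $|q|\leq A_0$ to guarantee $|q+Du|$ stays away from $0$ there. The only differences are cosmetic: the paper takes $\gamma=2A_0$ and keeps the factor $\overline\sigma=\min\bigl(\sigma_1(A_0),\sigma_2(A_0)\bigr)$ explicit on the right-hand side (so $C_\tau$ visibly depends on $\sigma_i(A_0)$ as stated), while you take $\gamma=A_0+1$ and absorb the degeneracy via the normalization $\sigma_i(1)\geq 1$, which makes your constant depend on $\gamma=A_0+1$ rather than directly on $\sigma_i(A_0)$ --- an equivalent parametrization once the $\sigma_i$ are fixed, as you note, though it does not literally reproduce the dependence claimed in the statement. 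Your explicit sign analysis on $F(D^2u)$ is a welcome addition that the paper leaves implicit.
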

\begin{proof}
Let $p$ be such that $|p|>2A_0$. Hence $|q+p|>A_0$. This inequality builds upon Assumption \ref{assump_sigmabasic} to ensure 
\[
    \overline\sigma\coloneqq \min\left(\sigma_1(A_0),\sigma_2(A_0)\right)\leq\min\left(\sigma_1(|q+p|),\sigma_2(|q+p|)\right).
\]
Hence, $u$ solves
\[
    \mathcal{M}_{\lambda,\Lambda}^-(D^2u)\leq \frac{C_0}{\overline\sigma}\hspace{.2in}\mbox{in}\hspace{.2in}B_1\cap\left\lbrace|p|>2A_0 \right\rbrace
\]
and
\[
    \mathcal{M}_{\lambda,\Lambda}^+(D^2u)\geq -\frac{C_0}{\overline\sigma}\hspace{.2in}\mbox{in}\hspace{.2in}B_1\cap\left\lbrace|p|>2A_0 \right\rbrace.
\]
By setting $\gamma\coloneqq 2A_0$ in Proposition \ref{prop_holder1} the result follows.
\end{proof}

We continue with an application of the maximum principle to produce H\"older continuity for a solution to \eqref{eq_minq}-\eqref{eq_maxq} in case $|q|\geq A_0$. Our argument follows along the same lines as in \cite[Proposition 5]{HuaPimRamSwi2021}.

\begin{proposition}[H\"older continuity]\label{prop_holdercont1}
Let $u\in C(B_1)$ be a viscosity solution to \eqref{eq_minq}-\eqref{eq_maxq}. Suppose Assumptions \ref{assump_Felliptic} and \ref{assump_sigmabasic} are in force. Then $u\in C^\beta_{\rm loc}(B_1)$, for some universal constant $\beta\in(0,1)$. In addition, for every $\rho\in(0,1)$ there exists $C>0$ such that
\[
    \left\|u\right\|_{C^\beta(B_\rho)}\leq C.
\]
\end{proposition}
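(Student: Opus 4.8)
The plan is to reduce Proposition~\ref{prop_holdercont1} to the already-proved Proposition~\ref{prop_holderq1} by distinguishing the two regimes $|q|\le A_0$ and $|q|>A_0$, with $A_0>1$ a suitable threshold to be chosen. When $|q|\le A_0$, Proposition~\ref{prop_holderq1} applies directly and yields the desired H\"older estimate. The substance of the argument is therefore the complementary regime $|q|>A_0$, where the gradient term $|q+Du|$ may fail to be large even when $Du$ is bounded, so one cannot simply invoke Proposition~\ref{prop_holder1}. Here I would follow the scheme of \cite[Proposition 5]{HuaPimRamSwi2021}: fix $\rho\in(0,1)$, take two points $x_0,y_0\in B_\rho$, and run the classical doubling-of-variables argument of Ishii--Lions, testing the function
\[
\Phi(x,y):=u(x)-u(y)-L|x-y|^\beta-\frac{M}{2}\bigl(|x-x_0|^2+|y-y_0|^2\bigr),
\]
where $L$ is a large constant and $M$ is fixed to localize the maximum near $(x_0,y_0)$. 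Assuming for contradiction that $\Phi$ has a positive maximum at an interior point $(\bar x,\bar y)$ with $\bar x\ne\bar y$, the key point is that the penalization gradient $p:=L\beta|\bar x-\bar y|^{\beta-2}(\bar x-\bar y)$ is \emph{small} (of order $L|\bar x-\bar y|^{\beta-1}$ times a bounded factor, which one controls using the a priori bound on $|\bar x-\bar y|$ coming from $\|u\|_{L^\infty}$), so that $|q+p+(\text{small terms})|$ stays close to $|q|\ge A_0$; by monotonicity of $\sigma_i$ this bounds $\sigma_i$ from below by $\min(\sigma_1(A_0/2),\sigma_2(A_0/2))=:\underline\sigma>0$.

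With this lower bound in hand, I would apply Proposition~\ref{prop_jil} (the maximum principle) to produce matrices $X,Y$ with $X\le Y$ satisfying the two viscosity inequalities \eqref{eq_minq}--\eqref{eq_maxq} evaluated at $(\bar x,\bar y)$. Dividing through by $\sigma_i$ (legitimate since $\sigma_i\ge\underline\sigma$) converts these into Pucci inequalities $\mathcal M^-_{\lambda,\Lambda}(X)\le C_0/\underline\sigma$ and $\mathcal M^+_{\lambda,\Lambda}(Y)\ge -C_0/\underline\sigma$, and subtracting and using the matrix inequality from Proposition~\ref{prop_jil} (with the standard choice of $\Psi$ incorporating the $L|x-y|^\beta$ term) gives a quantitative contradiction for $L$ large and $\beta$ small, exactly as in the large-gradient regularity theory. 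This forces either $\bar x=\bar y$ or the maximum of $\Phi$ to be nonpositive, and in either case one reads off $u(x_0)-u(y_0)\le L|x_0-y_0|^\beta$ after sending the localization off; since $x_0,y_0\in B_\rho$ were arbitrary, $\|u\|_{C^\beta(B_\rho)}\le C$ follows, with $C$ and $\beta$ depending only on $\lambda,\Lambda,d,C_0$ and $\underline\sigma$, hence ultimately on $\sigma_i(A_0)$.

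Finally I would combine the two regimes: the H\"older seminorm bound is uniform over $|q|\le A_0$ by Proposition~\ref{prop_holderq1} and uniform over $|q|>A_0$ by the argument just sketched, so taking the larger of the two constants yields the claimed estimate for every $q\in\mathbb R^d$. The main obstacle is the careful bookkeeping in the $|q|>A_0$ case: one must verify that the perturbation of the gradient introduced by the doubling penalization and the quadratic localization terms is genuinely negligible compared to $|q|$, so that the uniform ellipticity (after dividing by $\sigma_i$) is not lost; this is precisely where the monotonicity of the $\sigma_i$ from Assumption~\ref{assump_sigmabasic} is used, together with an a priori $L^\infty$-dependent bound on $|\bar x-\bar y|$ to control the $(\beta-1)$-power blow-up of the penalization gradient. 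The remaining steps are routine adaptations of the Ishii--Lions method and of \cite[Proposition 5]{HuaPimRamSwi2021}.
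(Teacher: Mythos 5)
Your overall strategy matches the paper: split into $|q|\le A_0$, where Proposition~\ref{prop_holderq1} applies directly, and $|q|>A_0$, where one runs a doubling-of-variables/Ishii--Lions argument. However, there is a concrete gap in the $|q|>A_0$ regime, and it lies exactly at the point you flag as ``the main obstacle.''

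You choose the penalization $L|x-y|^\beta$ with $\beta\in(0,1)$ and assert that the associated gradient $p=L\beta|\bar x-\bar y|^{\beta-2}(\bar x-\bar y)$ is \emph{small}, so that $|q+p|$ stays close to $|q|\ge A_0$. This is false. One has $|p|=L\beta|\bar x-\bar y|^{\beta-1}$, and since $\beta-1<0$ this quantity \emph{blows up} as $\bar x-\bar y\to 0$. The ``a priori bound on $|\bar x-\bar y|$ coming from $\|u\|_{L^\infty}$'' is an \emph{upper} bound ($L|\bar x-\bar y|^\beta\le 2\|u\|_{L^\infty}$), which does nothing to control $|\bar x-\bar y|^{\beta-1}$ from above; there is no quantitative lower bound on $|\bar x-\bar y|$ available. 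Consequently $|p|$ can be of the same order as $|q|$, and if $p$ happens to oppose $q$ then $|q+p|$ can be arbitrarily small; the lower bound $\sigma_i(|q+p|)\ge\underline\sigma>0$ that you rely on to divide through and obtain Pucci inequalities is not justified, and the argument collapses.

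The paper avoids this precisely by \emph{not} using a H\"older penalization in the large-$|q|$ regime. Instead it uses the Lipschitz-type modulus $\omega(t)=t-t^2/2$, whose derivative is bounded by $1$, together with a quadratic localization; this yields the uniform bound $|p_x|,|p_y|\le aL_1$ for a universal $a$. The threshold is then chosen \emph{as a function of $L_1$}, namely $A_0=10aL_1$, so that $|q|>A_0$ automatically forces $|q+p_j|\ge\frac{9}{10}A_0$, giving the lower bound on $\sigma_i$ that the argument needs. In this regime the paper actually proves \emph{Lipschitz} (not merely H\"older) continuity, and then falls back on Proposition~\ref{prop_holderq1} for $|q|\le A_0$. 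The moral: in the degenerate setting the penalization must have a bounded gradient so that the threshold $A_0$ can be tuned to dominate it; a genuine $|x-y|^\beta$ penalization with $\beta<1$ cannot serve this purpose.
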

\begin{proof}
We argue as in the proof of \cite[Proposition 5]{HuaPimRamSwi2021}; in the sequel, we omit most of the details, stressing the main differences with respect to the argument in that paper. We split the proof into four steps.

\bigskip

\noindent{\bf Step 1 - }Fix $0<r<(1-\tau)/2$ and consider the modulus of continuity $\omega(t)\coloneqq t-t^2/2$. Define the quantity
\[
    L\coloneqq \sup_{x,y\in B_r(x_0)}\left(u(x)-u(y)-L_1\omega(|x-y|)-L_2\left(|x-x_0|^2+|y-x_0|^2\right)\right).
\]
As usual, our goal is to choose $L_1$ and $L_2$ such that $L\leq 0$ for every $x_0\in B_\tau$. Suppose such a choice is not possible. It is tantamount to say there exists $x_0$ such that $L>0$ regardless of the choice of $L_1$ and $L_2$. Arguing as in \cite[Proposition 5]{HuaPimRamSwi2021}, we find points $(X,p_x,x),\,(Y,p_y,y)\in S(d)\times\mathbb{R}^d\times B_r(x_0)$, and a constant $\iota>0$, such that
\begin{equation}\label{eq_contradiction1}
\mathcal{M}_{\lambda,\Lambda}^-(X-Y)\geq 4\lambda L_1-(\lambda+(d-1)\Lambda)(4L_2+2\iota),
\end{equation}
\begin{equation}\label{eq_contradiction2}
    \min\left(\sigma_1(|q+p_x|)F(X),\sigma_2(|q+p_x|)F(X)\right)\leq C_0,
\end{equation}
\begin{equation}\label{eq_contradiction3}
    \max\left(\sigma_1(|q+p_y|)F(Y),\sigma_2(|q+p_y|)F(Y)\right)\leq C_0,
\end{equation}
and
\begin{equation}\label{eq_contradiction4}
    F(X)\geq F(Y)+\mathcal{M}_{\lambda,\Lambda}^-(X-Y).
\end{equation}

Combining \eqref{eq_contradiction1}-\eqref{eq_contradiction4}, we get
\begin{equation}\label{eq_contradiction5}
    4\lambda L_1\leq \left(\lambda+(d-1)\Lambda\right)\left(4L_2+2\iota\right)+C_0\left(\frac{1}{\sigma_i(|q+p_j|)}+\frac{1}{\sigma_k(|q+p_\ell|)}\right),
\end{equation}
where $i,k\in\{1,2\}$, and $j,\ell\in\{x,y\}$.

\bigskip

\noindent{\bf Step 2 - }Because 
\[
|p_x|,|p_y|\leq L_1(1+|x-y|)+2L_2,
\]
where $L_2\coloneqq (4\sqrt{2}/r)^2$, we conclude there exists $a>0$ such that 
\[
|p_x|,|p_y|\leq aL_1.
\]

Set $A_0=10aL_1$ and suppose $|q|>A_0$. For those choices, it is clear that $q\neq p_x$ and $q\neq p_y$. Moreover,
\[
    \left|q+p_j\right|\geq A_0-\frac{A_0}{10}\geq \frac{9}{10}A_0,
\]
for $j\in\{x,y\}$. Therefore,
\[
    \frac{1}{\sigma_i(|q+p_j|)}\leq\frac{1}{\sigma_i(|q+p_j|)}=\frac{1}{\sigma_i(9aL_1)},
\]
for $i\in \{1,2\}$. Hence, \eqref{eq_contradiction5} becomes
\[
    4\lambda L_1\leq \left(\lambda+(d-1)\Lambda\right)\left(4L_2+2\iota\right)+C(L_1)C_0,
\]
where the constant $C(L_1)$ is monotone decreasing in $L_1$. By taking $L_1>0$ large enough, one gets a contradiction and proves that solutions to \eqref{eq_minq}-\eqref{eq_maxq} are locally Lipschitz continuous if $|q|\geq A_0$. 

\bigskip
\noindent{\bf Step 3 - }In case $|q|\leq A_0$, Proposition \ref{prop_holderq1} ensures the H\"older continuity of $u$. This fact builds upon the conclusion in Step 2 to complete the proof.
\end{proof}

The compactness stemming from the former result unlocks an approximation lemma, instrumental in our analysis. This is the content of the next proposition.

\begin{proposition}[Approximation Lemma]\label{prop_approximation}
Let $u\in C(B_1)$ be a viscosity solution to \eqref{eq_minq}-\eqref{eq_maxq}. Suppose Assumptions \ref{assump_Felliptic} and \ref{assump_sigmabasic} are in force. Let $\alpha_0\in(0,1)$ be the exponent for the Krylov-Safonov regularity theory available for $F=0$. For every $\delta>0$ there exists $\varepsilon>0$ such that, if $C_0 \leq \varepsilon$ then one can find $h \in C^{1,\alpha_0}_{\rm loc}(B_{9/10})$ satisfying 
\[
    \left\|u - h \right\|_{L^\infty(B_{9/10})} \leq \delta. 
\]
In addition, there exists $C>0$ for which
\begin{equation*}
\left\| h \right\|_{C^{1,\alpha_0}(B_{8/9})} \leq C.
\end{equation*}
Finally, the constant $C = C(d,\lambda,\Lambda)>0 $ is independent of $q$.
\end{proposition}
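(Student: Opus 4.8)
The plan is to argue by contradiction and compactness, reducing the statement to the classical interior $C^{1,\alpha_0}$ estimate for the homogeneous equation $F(D^2 h)=0$. Throughout I work, as in the rest of the section, with normalized solutions, so that $\|u\|_{L^\infty(B_1)}\le 1$. Suppose the proposition fails: there exist $\delta>0$ and, for each $n$, a normalized viscosity solution $u_n$ of \eqref{eq_minq}--\eqref{eq_maxq} with $q=q_n$ and $C_0=\varepsilon_n\to 0$, such that $\|u_n-h\|_{L^\infty(B_{9/10})}>\delta$ for every $h\in C^{1,\alpha_0}_{\rm loc}(B_{9/10})$ with $\|h\|_{C^{1,\alpha_0}(B_{8/9})}\le C$, where $C=C(d,\lambda,\Lambda)$ is the universal Krylov--Safonov constant fixed below. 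If $|q_n|\to\infty$ along a subsequence, then by Step 2 in the proof of Proposition \ref{prop_holdercont1} the $u_n$ are uniformly Lipschitz, so the translated gradient stays large and the degeneracy factor in \eqref{eq_minq}--\eqref{eq_maxq} remains bounded below by a positive constant; consequently $u_n$ solves $F(D^2 u_n)=O(\varepsilon_n)$ in the viscosity sense, and any uniform limit of a subsequence solves $F(D^2 u)=0$ in $B_{9/10}$ with no further difficulty. So we may assume $q_n\to q_\infty\in\mathbb{R}^d$.

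By Proposition \ref{prop_holdercont1} the family $(u_n)$ is bounded in $C^\beta(\overline{B_{9/10}})$ for a universal $\beta\in(0,1)$, uniformly in $n$; the Arzel\`a--Ascoli theorem then furnishes a subsequence (not relabeled) converging uniformly on $\overline{B_{9/10}}$ to some $u_\infty\in C(\overline{B_{9/10}})$ with $\|u_\infty\|_{L^\infty}\le 1$. The heart of the argument is to show that $u_\infty$ is a viscosity solution of $F(D^2 u_\infty)=0$ in $B_{9/10}$. Let $\varphi\in C^2$ touch $u_\infty$ from above at $x_0\in B_{9/10}$, strictly without loss of generality; uniform convergence provides $x_n\to x_0$ with $u_n-\varphi$ attaining a local maximum at $x_n$, and \eqref{eq_minq} yields
\[
\min\big(\sigma_1(|q_n+D\varphi(x_n)|)\,F(D^2\varphi(x_n)),\ \sigma_2(|q_n+D\varphi(x_n)|)\,F(D^2\varphi(x_n))\big)\le\varepsilon_n.
\]
If $q_\infty+D\varphi(x_0)\ne 0$, then $\sigma_i(|q_n+D\varphi(x_n)|)\to\sigma_i(|q_\infty+D\varphi(x_0)|)>0$ by Assumption \ref{assump_sigmabasic}, and passing to the limit forces $F(D^2\varphi(x_0))\le 0$; the mirror inequality for test functions touching from below, read off from \eqref{eq_maxq}, gives the super-solution property.

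The main obstacle is the remaining case $q_\infty+D\varphi(x_0)=0$, in which the degeneracy factor vanishes in the limit and the displayed inequality becomes vacuous. I would handle it with the device standard in the regularity theory of degenerate fully nonlinear equations (cf.\ \cite{Birindelli-Demengel2014,ImbSil2016,HuaPimRamSwi2021}): one already knows $u_\infty$ solves $F(D^2 u_\infty)=0$ wherever the translated gradient is bounded away from $0$, and one pushes this through the degenerate set by perturbing the test function, replacing $\varphi$ by $\varphi+\mu\Phi_\mu$ with $\Phi_\mu$ a smoothing family having non-vanishing gradient at $x_0$ (equivalently, perturbing $u_\infty$ by a small affine term), and exploiting uniform ellipticity as $\mu\to 0$. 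Granting this, $u_\infty$ is $F$-harmonic in $B_{9/10}$, so the Krylov--Safonov / Caffarelli--Cabr\'e interior estimate gives $u_\infty\in C^{1,\alpha_0}_{\rm loc}(B_{9/10})$ with $\|u_\infty\|_{C^{1,\alpha_0}(B_{8/9})}\le C(d,\lambda,\Lambda)$, the constant depending on nothing more because $\|u_\infty\|_{L^\infty}\le 1$. Taking $h:=u_\infty$ and using $u_n\to u_\infty$ uniformly on $\overline{B_{9/10}}$ gives $\|u_n-h\|_{L^\infty(B_{9/10})}\le\delta$ for all large $n$, which contradicts the choice of the sequence and proves the proposition.
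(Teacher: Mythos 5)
Your overall strategy mirrors the paper's: argue by contradiction, use the H\"older compactness from Proposition~\ref{prop_holdercont1} to extract a uniform limit $u_\infty$, show that $u_\infty$ is $F$-harmonic, invoke the Krylov--Safonov $C^{1,\alpha_0}$ estimate, and derive a contradiction with $h:=u_\infty$. The non-degenerate cases (unbounded $q_n$, or $q_\infty + D\varphi(x_0)\neq 0$) are handled essentially as in the paper's Steps 3 and 4.

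The genuine gap is the degenerate case $q_\infty + D\varphi(x_0)=0$, which you dispose of in one sentence by citing ``the device standard in the regularity theory of degenerate fully nonlinear equations'' and describing it as ``perturbing $u_\infty$ by a small affine term'' or ``replacing $\varphi$ by $\varphi+\mu\Phi_\mu$ with $\Phi_\mu$ a smoothing family having non-vanishing gradient at $x_0$.'' This is the heart of the proof and the description as stated does not work. An affine perturbation $\mu\,a\cdot x$ shifts the touching point to some $\tilde x_\mu$ depending on $\mu$ and on $u_n$, and the new gradient of the test function at the touching point, $D\varphi(\tilde x_\mu)+\mu a + q_n\approx M\tilde x_\mu+\mu a+q_n$, is not bounded away from zero: the term $M\tilde x_\mu$ can cancel $\mu a$, since you have no control over where the touching occurs. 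The paper's actual device (Step 5) is the non-smooth cone perturbation $\varphi(x)=\kappa\sup_{e\in\mathbb{S}^{d-1}}\langle P_Ex,e\rangle + \tfrac12 x^T M x$, where $E$ is the span of the eigenvectors of $M$ with strictly positive eigenvalues. One then splits according to whether the touching point $x_n^\kappa$ lies in $G=E^\perp$ (the vertex set of the cone), in which case one may replace the cone by the linear slice $\varphi_e$ with $e:=Mx_n/|Mx_n|$ chosen so that $\kappa e$ and $Mx_n$ add constructively, giving $|\kappa e + Mx_n + q_n|\geq \kappa/2$; or $P_Ex_n\neq 0$, in which case $\varphi$ is $C^2$ there, the extra Hessian contribution is nonnegative and is absorbed by ellipticity, and again the gradient is bounded below by $\kappa/2$. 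Both branches yield $F_n(M)\leq C_n/\sigma_i^n(\kappa/2)\to 0$. The restriction of the cone to the positive eigenspace $E$ is essential for the ellipticity step, and none of this is recoverable from an affine perturbation. Without reproducing this construction (or an equivalent one) the proof of the sub/super-solution property at degenerate touching points is missing.

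A secondary omission: the paper's contradiction argument carries sequences $(F_n)$ and $(\sigma_i^n)$ as well as $(q_n)$ and $(C_n)$, extracting a limiting operator $F_\infty$ by uniform Lipschitz continuity. You keep $F$ and $\sigma_i$ fixed. The statement of the proposition requires $C$ and $\varepsilon$ to depend only on $(d,\lambda,\Lambda)$, because in Proposition~\ref{prop_osccontrol} the lemma is applied to the rescaled operators $F_k$ and degeneracies $\sigma_i^k$. Fixing $F$ and $\sigma_i$ in the compactness argument does not by itself deliver this uniformity; you should allow them to vary along the sequence, as the paper does.
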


\begin{proof}
The proof resorts to a contradiction argument. For ease of presentation, we split it into six steps.

\medskip

\noindent{\bf Step 1 - }Suppose the result does not hold. In this case, there are sequences $(\sigma_1^n)_{n\in\mathbb{N}}$, $(\sigma_2^n)_{n\in\mathbb{N}}$, $(q_n)_{n\in\mathbb{N}}$, $(u_n)_{n\in\mathbb{N}}$, $(F_n)_{n\in\mathbb{N}}$, $(f_n)_{n\in\mathbb{N}}$ and $\delta_0>0$ such that:
\begin{enumerate}
    \item The operator $F_n$ satisfies Assumption \ref{assump_Felliptic}, for every $n\in \mathbb{N}$;
    \item The functions $\sigma_1^n$ and $\sigma_2^n$ are such that $\sigma_i^n(0)=0$, $\sigma_i^n(1)\geq 1$ and, if $\sigma_i^n(a_n)\to0$, then $a_n\to 0$;
    \item The function $f_n\in L^\infty(B_1)\cap C(\overline{B_1})$ is such that 
    \[
        \left\|f_n\right\|_{L^\infty(B_1)}=:C_n\to0\qquad\mbox{as}\quad n\to\infty;
    \]
    \item The following inequalities hold in the viscosity sense:
    \[
        \min\left(\sigma_1^n\left(\left|Du_n+q_n\right|\right)F_n(D^2u_n),\sigma_2^n\left(\left|Du_n+q_n\right|\right)F_n(D^2u_n)\right)\leq C_n
    \]
    and
    \[
        \max\left(\sigma_1^n\left(\left|Du_n+q_n\right|\right)F_n(D^2u_n),\sigma_2^n\left(\left|Du_n+q_n\right|\right)F_n(D^2u_n)\right)\geq -C_n,
    \]
    in the unit ball $B_1$;
    \item We have 
    \[
        \sup_{x\in B_{7/8}}\left|u_n(x)-h(x)\right|>\delta_0
    \]
    for every $n\in\mathbb{N}$, and every $h\in C^{1,\alpha_0}_{\rm loc}(B_{8/9})$.
\end{enumerate}
To produce a contradiction, we use the compactness available for the sequence $(u_n)_{n\in\mathbb{N}}$ and the uniform ellipticity of $F_n$. This is the subject of the next steps.

\medskip

\noindent{\bf Step 2 - } Because of Proposition \ref{prop_holdercont1}, there exists a subsequence, still denoted with $(u_n)_{n\in\mathbb{N}}$, converging uniformly to some $u_\infty \in C^\beta_{\text{\rm loc}}({B_1})$. Also, Assumption \ref{assump_Felliptic} implies that $(F_n)_{n \in \mathbb{N}}$ is a sequence of uniformly Lipschitz-continuous operators. As a consequence, there exists $F_\infty$ satisfying Assumption \ref{assump_Felliptic} such that $F_n$ converges to $F_\infty$, locally uniformly (through some subsequence if necessary). Our goal is to prove that $u_\infty$ is a viscosity solution to $F_\infty(D^2w) = 0$ in $B_1$. We only show a sub-solution property, as its super-solution counterpart is entirely analogous. Consider the paraboloid $p(x)$ defined as
\[
    p(x)\coloneqq u_\infty(y)+{\rm b}\cdot (x-y)+\frac{1}{2}(x-y)^TM(x-y).
\]
Suppose $p$ touches $u_\infty$ from above in a vicinity of $y\in B_1$. Consider also the sequence $(x_n)_{n\in\mathbb{N}}$ such that $p$ touches $u_n$ from above at $x_n$ and $x_n\to y$ as $n\to \infty$. We then have
\begin{equation}\label{eq_bennygantz}
	\min\left(\sigma_1^n\left(\left|{\rm b}+q_n\right|\right)F_n(M),\sigma_2^n\left(\left|{\rm b}+q_n\right|\right)F_n(M)\right)\leq C_n.
\end{equation}
The proof is complete if we verify $F_\infty(M)\leq 0$. To reach this conclusion we split the remainder of our argument into several cases, depending on the behaviour of ${\rm b}+q_n$.

\medskip

\noindent{\bf Step 3 - }Suppose the sequence $(q_n)_{n \in \mathbb{N}}$ does not admit a convergent subsequence. That is, $|q_n| \to \infty$ as $n \to \infty$. Then there exists $N\in\mathbb{N}$ such that 
\[
	\left|{\rm b}+q_n\right|\geq1
\]
for $n>N$. In this case, \eqref{eq_bennygantz} implies
\[
	F_n(M)\leq\frac{C_n}{\sigma_1^n\left(\left|{\rm b}+q_n\right|\right)}\leq C_n\qquad\mbox{or}\qquad F_n(M)\leq\frac{C_n}{\sigma_2^n\left(\left|{\rm b}+q_n\right|\right)}\leq C_n.
\]
In any case, we get $F_n(M)\leq C_n$. By taking the limit $n\to\infty$, one recovers $F_\infty(M)\leq 0$ and completes the proof in this case. It remains to examine the case where $(q_n)_{n\in\mathbb{N}}$ is bounded.

\medskip

\noindent{\bf Step 4 - } If the sequence $(q_n)_{n \in \mathbb{N}}$ is bounded, at least through a subsequence it converges to some $q_\infty\in\mathbb{R}^d$. Suppose 
\[
	\left|{\rm b}+q_\infty\right|=:\tau>0.
\]
Then one can find $N\in\mathbb{N}$ such that 
\[
	\left|{\rm b}+q_n\right|>\frac{\tau}{2},
\]
provided $n>N$. Hence, $\sigma_i^n\left(\left|{\rm b}+q_n\right|\right)>\sigma_i^n(\tau/2)$ for every $n>N$ and the previous argument easily adjusts to yield
\[
	F_n(M)\leq\frac{C_n}{\sigma_1^n\left(\tau/2\right)}\qquad\mbox{or}\qquad F_n(M)\leq\frac{C_n}{\sigma_2^n\left(\tau/2\right)}.
\]
In any case, $F_n(M)\leq \overline{C_n}$, with $\overline{C_n}\to0$ as $n\to \infty$. Once again we recover $F_\infty(M)\leq 0$. It remains to study the case $\left|{\rm b}+q_\infty\right|=0$.

\medskip

\noindent{\bf Step 5 - }Without loss of generality we suppose ${\rm b}=0$ and $y=0$. Also, suppose $M$ has $k\in\left\lbrace 1,\ldots,d\right\rbrace$ strictly positive eigenvalues. Indeed, were all the eigenvalues of $M$ non-positive, ellipticity would ensure $F_n(M)\leq 0$ for every $n\in\mathbb{N}$, leading immediately to the desired conclusion.

For $i=1,\ldots,k$, denote with $e_i$ the eigenvector associated with the $i$-th strictly positive eigenvalue of $M$. Define $E$ as the subspace of $\mathbb{R}^d$ spanned by $e_1,\ldots, e_k$ and write $\mathbb{R}^d=:E\oplus G$. Finally, consider the test function
\[
\varphi(x)\coloneqq \kappa\sup_{e\in\mathbb{S}^{d-1}}\left\langle P_Ex,e\right\rangle+\frac{1}{2}x^TMx,
\]
where $0<\kappa\ll1$ is a fixed constant, $P_E$ is the orthogonal projection into $E$ and $\mathbb{S}^{d-1}$ is the unit sphere of dimension $(d-1)$. We notice that usual stability results ensure that $\varphi$ touches $u_n$ from above at some point $x_n^\kappa$, with $x_n^\kappa \to 0$ as $n\to \infty$.
Note that $\varphi$ is ${C}^2$ outside $G$. If $x_n^\kappa \in G $, we modify the function $\varphi$ to consider
\[
\varphi_e(x)\coloneqq \kappa\left\langle P_Ex,e\right\rangle+\frac{1}{2}x^TMx.
\]
Hence,
\begin{equation}\label{eq_arrow}
\min\left(\sigma_1^n\left(\left|\kappa e+Mx_n+q_n\right|\right)F_n(M),\sigma_2^n\left(\left|\kappa e+Mx_n+q_n\right|\right)F_n(M)\right)\leq C_n.
\end{equation}
Choosing,
\[
e\coloneqq \frac{Mx_n}{\left|Mx_n\right|}
\]
and noticing that
\[
\frac{\kappa}{2}\leq \kappa-\left|q_n\right|\leq \left|\kappa e+Mx_n+q_n\right|,
\]
for large enough $n\gg1$, inequality \eqref{eq_arrow} yields
\begin{equation}\label{eq_danhagari}
F_n(M)\leq\frac{C_n}{\sigma_1^n\left(\kappa/2\right)}\qquad\mbox{or}\qquad F_n(M)\leq\frac{C_n}{\sigma_2^n\left(\kappa/2\right)},
\end{equation}
for every $n\in \mathbb{N}$.

Now, suppose $P_Ex_n\neq 0$. Arguing as before, we get that either
\[
\sigma_1^n\left(\left|Mx_n+\kappa\frac{P_Ex_n}{|P_Ex_n|}+q_n\right|\right)F_n\left(M+\kappa\left(Id+\frac{P_Ex_n}{|P_Ex_n|}\otimes\frac{P_Ex_n}{|P_Ex_n|}\right)\right)\leq C_n
\]
or
\[
\sigma_2^n\left(\left|Mx_n+\kappa\frac{P_Ex_n}{|P_Ex_n|}+q_n\right|\right)F_n\left(M+\kappa\left(Id+\frac{P_Ex_n}{|P_Ex_n|}\otimes\frac{P_Ex_n}{|P_Ex_n|}\right)\right)\leq C_n.
\]
Notice that
\[
\frac{\kappa}{2}\leq \kappa-\left|q_n\right|\leq \left|Mx_n+\kappa\frac{P_Ex_n}{|P_Ex_n|}+q_n\right|;
\]
moreover,
\[
\kappa\left(Id+\frac{P_Ex_n}{|P_Ex_n|}\otimes\frac{P_Ex_n}{|P_Ex_n|}\right)\geq 0,
\]
in the sense of matrices. Ellipticity builds upon the monotonicity of $\sigma_i^n$ to produce \eqref{eq_danhagari} also when $P_ex_n\neq 0$. By taking the limit $n\to\infty$ in \eqref{eq_danhagari}, we get $F_\infty(M)\leq 0$. We have established that $u_\infty$ is a viscosity solution to $F_\infty=0$ in $B_1$.

\medskip

\noindent{\bf Step 6 - }Because $F_\infty(D^2u_\infty)=0$ in $B_1$, standard regularity results imply that $u_\infty\in C^{1,\alpha_0}_{loc}(B_1)$ with estimates, where $\alpha_0\in(0,1)$ is the (universal) exponent stemming from the Krylov-Safonov theory. By taking $h\coloneqq u_\infty$, one finds a contradiction and concludes the proof.
\end{proof}

We use Proposition \ref{prop_approximation} to construct a sequence of approximating hyperplanes. The difference between the solution and such approximating hyperplanes behaves in a precise geometric fashion. Such a geometric control of the difference between the solution and a hyperplane is key to differentiability.

\subsection{Proof of Theorem \ref{theo_regularity}}\label{subsec_JR}

For $\alpha_0\in (0,1)$ as in Proposition \ref{prop_approximation}, 
choose $r\in(0,1)$ and $\mu_1>0$ such that 
\begin{equation}\label{eq_mu13}
    2Cr^{1+\alpha_0}=\mu_1r,
\end{equation}
where $C>0$ is the universal constant in Proposition \ref{prop_approximation}. 

We proceed by defining the constant $\theta = \frac{r}{\mu_1}$ and considering the sequence $ (a_k)_{k \in \mathbb{N}}$ defined as
\begin{equation*}
    a_k \coloneqq  \max\left( \sigma_1^{-1}(\theta^k),\sigma_2^{-1}(\theta^k) \right).
\end{equation*}

Under Assumption \ref{assump_sigmabasic}, we conclude $(a_k)_{k \in \mathbb{N}} \in \ell^1$. Now, we resort to Proposition \ref{prop_modulator}. For $0 < \delta < \frac{1}{4}$, we set $0 < \varepsilon < 1$ as
\[
    \varepsilon\coloneqq \frac{1}{1+\delta}.
\]
For these choices, an application of Proposition \ref{prop_modulator} yields a sequence $(c_k)_{k \in \mathbb{N}}$ such that 
\begin{equation*}
\frac{7}{10}\sum\limits_{i=1}^{\infty} a_k \leq \sum\limits_{i=1}^{\infty} \frac{a_k}{c_k} \leq \sum\limits_{i=1}^{\infty} a_k<\infty.
\end{equation*}

Finally, we design two sequences of moduli of continuity $\left(\sigma_1^k(\cdot)\right)_{k\in\mathbb{N}}$ and $\left(\sigma_2^k(\cdot)\right)_{k\in\mathbb{N}}$ given by
\[
\begin{split}
\sigma_{i}^0(t)&\coloneqq  \sigma_i(t),\\
\sigma_{i}^1(t)&\coloneqq  \frac{\mu_1}{r} \sigma_i(\mu_1 t),\\
\sigma_{i}^2(t)&\coloneqq  \frac{\mu_1 \mu_2^*}{r^2} \sigma_i(\mu_1 \mu_2^* t),\\
&\vdots\\
\sigma_{i}^k(t)&\coloneqq  \frac{\mu_1\prod\limits_{j=2}^{k} \mu_j^*}{r^k} \sigma_i\left(\mu_1\prod\limits_{j=2}^{k} \mu_j^* t\right),
\end{split}
\]
with $\mu_1 > r$ as defined and $(\mu_k^*)_{k\in\mathbb{N}}$ determined as follows. Set
\[
	\mu_k^*\coloneqq \max\left(\mu_k^1,\mu^2_k\right).
\]
If 
\[
	\frac{\mu_{k-1}^*\prod\limits_{j=1}^{k-1} \mu^*_j}{r^k} \sigma_i\left(\mu_{k-1}^*\prod\limits_{j=1}^{k-1} \mu^*_j c_k\right) \geq 1,
\]
then $\mu_k^i = \mu_{k-1}^*$. Otherwise $\mu_k^i< 1$ is chosen to ensure
\[
	\frac{\mu_{k}^i\prod\limits_{j=1}^{k-1} \mu^*_j}{r^k} \sigma_i\left(\mu_k^i\prod\limits_{j=1}^{k-1} \mu_j c_k\right) = 1.
\]

As before, $\mu_k^*\coloneqq \max\left(\mu_k^1,\mu^2_k\right)$. Once these ingredients are available, we combine them with Proposition \ref{prop_approximation} to produce a sequence of affine functions whose difference with respect to $u$ grows in a controlled fashion.

\begin{proposition}\label{prop_discretehyperplane}
Let $u \in C(B_1)$ be a normalized viscosity solution to \eqref{eq_minq}-\eqref{eq_maxq}. Suppose Assumptions \ref{assump_Felliptic} and \ref{assump_sigmabasic} hold true. There exists $\varepsilon > 0$ such that, if $\left\|f\right\|_{L^{\infty}(B_1)} < \varepsilon$, one finds $a\in\mathbb{R}$ and ${\rm b}\in\mathbb{R}^d$ satisfying $|a| + \left\|b\right\| \leq C$, for some universal constant $C>0$. In addition,
\begin{equation*}
    \sup_{x\in B_r} \left|u(x) - (a+{\rm b}\cdot x)\right| \leq \mu_1 r,
\end{equation*}
where $\mu_1>0$ has been chosen in the algorithm described above.
\end{proposition}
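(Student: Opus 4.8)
The plan is to apply the Approximation Lemma (Proposition~\ref{prop_approximation}) with a suitably small $\delta$ and then transfer the $C^{1,\alpha_0}$-information of the approximating function $h$ to $u$ at scale $r$. First I would fix $\delta>0$ to be chosen later, let $\varepsilon=\varepsilon(\delta)>0$ be the threshold produced by Proposition~\ref{prop_approximation}, and assume $\|f\|_{L^\infty(B_1)}=C_0\le\varepsilon$. Since $u$ is a normalized viscosity solution to \eqref{eq_minq}--\eqref{eq_maxq}, the proposition yields $h\in C^{1,\alpha_0}_{\rm loc}(B_{9/10})$ with $\|u-h\|_{L^\infty(B_{9/10})}\le\delta$ and $\|h\|_{C^{1,\alpha_0}(B_{8/9})}\le C$, the constant $C$ being universal and independent of $q$. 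Set $a:=h(0)$ and ${\rm b}:=Dh(0)$; the $C^{1,\alpha_0}$-bound on $h$ immediately gives $|a|+\|{\rm b}\|\le C$, which is the first assertion.

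Next I would estimate $u-(a+{\rm b}\cdot x)$ on $B_r$ by the triangle inequality, splitting it as $\|u-h\|_{L^\infty(B_r)}+\sup_{B_r}|h(x)-h(0)-Dh(0)\cdot x|$. The first term is bounded by $\delta$. For the second term, Taylor's theorem together with the $C^{1,\alpha_0}$-estimate on $h$ gives, for $x\in B_r$,
\[
    |h(x)-h(0)-Dh(0)\cdot x|\le [Dh]_{C^{0,\alpha_0}(B_{8/9})}\,|x|^{1+\alpha_0}\le C\,r^{1+\alpha_0}.
\]
Hence
\[
    \sup_{x\in B_r}|u(x)-(a+{\rm b}\cdot x)|\le \delta + C r^{1+\alpha_0}.
\]
Now I invoke the defining relation \eqref{eq_mu13}, namely $2Cr^{1+\alpha_0}=\mu_1 r$, which forces $Cr^{1+\alpha_0}=\tfrac12\mu_1 r$; thus it suffices to choose $\delta\le \tfrac12\mu_1 r$ to conclude $\sup_{B_r}|u-(a+{\rm b}\cdot x)|\le\mu_1 r$. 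Since $\mu_1$ and $r$ are fixed universal quantities (chosen in the algorithm preceding the statement), this is a legitimate choice of $\delta$, and it fixes the corresponding $\varepsilon=\varepsilon(\delta)$; one then simply takes the resulting $\varepsilon$ in the statement of the proposition.

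The only genuinely delicate point is bookkeeping the order of quantifiers: $\mu_1$ and $r$ must be selected \emph{before} $\delta$, so that $\delta=\tfrac12\mu_1 r$ is admissible and the Approximation Lemma can then supply a uniform $\varepsilon$. This is consistent with the construction in the paragraphs above, where $r$ and $\mu_1$ are pinned down by \eqref{eq_mu11}--\eqref{eq_mu13} using only the universal constant $C$ of Proposition~\ref{prop_approximation}, so no circularity arises. A secondary point worth a line is that the factor $2C$ in \eqref{eq_mu13} is exactly what absorbs both the Taylor error $Cr^{1+\alpha_0}$ and the approximation error $\delta\le Cr^{1+\alpha_0}$; this is why the definition of $\mu_1$ carries the constant $2C$ rather than $C$. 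No further ingredients are needed beyond Proposition~\ref{prop_approximation} and elementary Taylor estimates.
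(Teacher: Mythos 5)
Your proof is correct and follows essentially the same approach as the paper: choose $\delta=\mu_1 r/2$, invoke the Approximation Lemma to produce $h$, Taylor-expand $h$ at the origin, and combine the two error terms using $2Cr^{1+\alpha_0}=\mu_1 r$ from the preceding construction. Your remark on quantifier ordering (that $r$ and $\mu_1$ are fixed before $\delta$, avoiding circularity) is a helpful clarification that the paper leaves implicit.
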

\begin{proof}
We start by choosing the approximation parameter $\delta>0$ in Proposition \ref{prop_approximation}. Indeed, set
\[
    \delta\coloneqq \frac{\mu_1 r}{2}
\]
and let $\varepsilon>0$ be the corresponding smallness regime ensuring the existence of $h\in C^{1,\alpha_0}_{\rm loc}(B_{9/10})$, with $\left\|h\right\|_{C^{1,\alpha_0}(B_{8/9})}\leq C$, such that 
\begin{equation}\label{eq_destri1}
    \sup_{x\in B_{8/9}} |u(x) - h(x)| \leq \frac{\mu_1 r}{2},
\end{equation}
where the inequality follows from \eqref{eq_mu13}. The regularity available for $h$ implies
\begin{equation}\label{eq_destri2}
     \sup_{x\in B_r} |h(x) - h(0) - Dh(0) \cdot x| \leq Cr^{1+\alpha_0}.
\end{equation}

By combining \eqref{eq_destri1} and \eqref{eq_destri2}, one finds
\begin{equation*}
    \sup_{x\in B_{r}} \left|u(x) - (a + {\rm b} \cdot x)\right| \leq \mu_1 r,
\end{equation*}
and completes the proof.
\end{proof}

Now we extrapolate the findings in Proposition \ref{prop_discretehyperplane} to arbitrary small scales, in a discrete scheme.

\begin{proposition}[Oscillation control at discrete scales]\label{prop_osccontrol} 
    Let $u \in C(B_1)$ be a normalized viscosity solution of \eqref{eq_minq}-\eqref{eq_maxq}. Assume that Assumptions \ref{assump_Felliptic} and \ref{assump_sigmabasic} hold and that $\left\|f\right\|_{L^\infty(B_1)} \leq \varepsilon$, where $\varepsilon$ is the same as in Proposition \ref{prop_discretehyperplane}. Then for every $n \in \mathbb{N}$, there are affine functions $(\phi_n)_{n\in\mathbb{N}}$ of the form
    \begin{equation*}
        \phi_n(x) \coloneqq  a_n + {\rm b}_n \cdot x
    \end{equation*}
satisfying
    \begin{equation*}
        \sup_{x\in B_{r_n}} |u(x) - \phi_n(x)| \leq \left(\prod\limits_{j=1}^{n} \mu^*_j\right) r_n,
    \end{equation*}
    \begin{equation*}
        |a_{n+1} - a_n| \leq C \left(\prod\limits_{j=1}^{n} \mu_j^* \right) r_n,
    \end{equation*}
and
    \begin{equation*}
        |{\rm b}_{n+1} - {\rm b}_n| \leq C  \prod\limits_{j=1}^{n} \mu_j^*.
    \end{equation*}
\end{proposition}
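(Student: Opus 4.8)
The plan is to prove Proposition \ref{prop_osccontrol} by induction on $n$, with Proposition \ref{prop_discretehyperplane} supplying the base case and, after a suitable rescaling, driving the inductive step. It is convenient to write $r_n:=r^n$ and $\Pi_n:=\prod_{i=1}^n\mu_i$ (empty product equal to $1$), and to set $\phi_0\equiv 0$. Since $u$ is normalized, $\sup_{B_{r_0}}|u-\phi_0|=\|u\|_{L^\infty(B_1)}\leq 1=\Pi_0 r_0$, so the base case amounts to applying Proposition \ref{prop_discretehyperplane} to $u$, which yields $a_1\in\mathbb{R}$, ${\rm b}_1\in\mathbb{R}^d$ with $|a_1|+|{\rm b}_1|\leq C$ and $\sup_{B_r}|u-\phi_1|\leq\mu_1 r$, i.e.\ the statement at level $n=1$.

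For the inductive step, assume $\phi_n(x)=a_n+{\rm b}_n\cdot x$ has already been built, with $\sup_{B_{r_n}}|u-\phi_n|\leq\Pi_n r_n$. I would introduce
\[
    v_n(x):=\frac{u(r_n x)-\phi_n(r_n x)}{\Pi_n r_n},\qquad x\in B_1,
\]
which satisfies $\|v_n\|_{L^\infty(B_1)}\leq 1$ by the inductive hypothesis. A direct computation gives $Du(r_n x)={\rm b}_n+\Pi_n Dv_n(x)$ and $D^2u(r_n x)=(\Pi_n/r_n)D^2v_n(x)$. Setting $q_n:=(q+{\rm b}_n)/\Pi_n$, $F_n(M):=(r_n/\Pi_n)F\bigl((\Pi_n/r_n)M\bigr)$, and recalling the definition $\sigma_i^n(t)=(\Pi_n/r_n)\sigma_i(\Pi_n t)$ from the algorithm preceding Proposition \ref{prop_discretehyperplane}, one verifies the pointwise identity
\[
    \sigma_i\bigl(|q+Du(r_n x)|\bigr)F\bigl(D^2u(r_n x)\bigr)=\sigma_i^n\bigl(|q_n+Dv_n(x)|\bigr)F_n\bigl(D^2v_n(x)\bigr),
\]
the two rescaling factors $(r_n/\Pi_n)$ and $(\Pi_n/r_n)$ cancelling each other. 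Since $F_n$ is $(\lambda,\Lambda)$-elliptic with the same constants, it then follows that $v_n$ is a normalized viscosity solution of \eqref{eq_minq}--\eqref{eq_maxq} in $B_1$, with operator $F_n$, slope $q_n$, degeneracy rates $\sigma_1^n\geq\sigma_2^n$, and right-hand side $C_0=\|f\|_{L^\infty(B_1)}\leq\varepsilon$.

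The crucial point — and the step I expect to be the main obstacle — is that this rescaled datum is admissible for Proposition \ref{prop_discretehyperplane} \emph{uniformly in $n$}. The dichotomous choice of $\mu_k$ (freeze $\mu_k=\mu_{k-1}$ as long as the normalization $\sigma_2^k(c_k)\geq 1$ persists, and otherwise shrink $\mu_k<1$ so that $\sigma_2^k(c_k)=1$), together with the sequence $(c_k)$ supplied by Proposition \ref{prop_modulator} (for which $\sum_k\sigma_2^{-1}(\theta^k)/c_k<\infty$), is designed precisely so that every $\sigma_i^n$ satisfies Assumptions \ref{assump_sigmabasic} and \ref{assump_sigmadini} with constants independent of $n$; the Dini-summability of $(\sigma_2^{-1}(\theta^k))_k$ is what keeps the recursion well posed. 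Granting this, the smallness threshold $\varepsilon$ and the universal constant $C$ (the latter inherited from Proposition \ref{prop_approximation}, independent of $q$ and of the degeneracy laws) are unchanged along the iteration, and the scale-$r$ decay delivered by Proposition \ref{prop_discretehyperplane} applied to $v_n$ is by the factor $\mu_{n+1}$ prescribed by the algorithm: one gets $\tilde a\in\mathbb{R}$, $\tilde{\rm b}\in\mathbb{R}^d$ with $|\tilde a|+|\tilde{\rm b}|\leq C$ and $\sup_{B_r}|v_n(x)-(\tilde a+\tilde{\rm b}\cdot x)|\leq\mu_{n+1}r$. Undoing the scaling, I would set $\phi_{n+1}(x):=\phi_n(x)+\Pi_n r_n\tilde a+\Pi_n\tilde{\rm b}\cdot x$, i.e.\ $a_{n+1}:=a_n+\Pi_n r_n\tilde a$ and ${\rm b}_{n+1}:={\rm b}_n+\Pi_n\tilde{\rm b}$; then $|a_{n+1}-a_n|=\Pi_n r_n|\tilde a|\leq C\,\Pi_n r_n$, $|{\rm b}_{n+1}-{\rm b}_n|=\Pi_n|\tilde{\rm b}|\leq C\,\Pi_n$, and, putting $x=r_n z$ with $z\in B_r$,
\[
    \sup_{B_{r_{n+1}}}|u-\phi_{n+1}|=\Pi_n r_n\sup_{B_r}\bigl|v_n(z)-(\tilde a+\tilde{\rm b}\cdot z)\bigr|\leq\Pi_n r_n\,\mu_{n+1}\,r=\Pi_{n+1}\,r_{n+1},
\]
since $\Pi_{n+1}=\Pi_n\mu_{n+1}$ and $r_{n+1}=r_n r$, which closes the induction. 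Apart from this routine rescaling bookkeeping, everything hinges on the uniform admissibility of the $\sigma_i^n$ — that the normalization survives at the relevant scale and $(\sigma_2^n)^{-1}$ stays Dini-continuous with a controlled modulus — which is exactly what the construction of $(\mu_k)$ and $(c_k)$ was engineered to provide.
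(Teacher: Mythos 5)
Your argument follows the same inductive rescaling scheme as the paper's proof, only written with a direct blow-up $v_n(x)=\bigl(u(r^n x)-\phi_n(r^n x)\bigr)/(\Pi_n r^n)$ rather than the paper's recursive $u_k=\bigl(u_{k-1}(r\cdot)-\ell_{k-1}(r\cdot)\bigr)/(\mu_k r)$; these unwind to the same object. The computation identifying the rescaled operator $F_n$, slope $q_n$ and degeneracy laws $\sigma_i^n$, and the reassembly of $\phi_{n+1}$ with the claimed bounds on $|a_{n+1}-a_n|$ and $|{\rm b}_{n+1}-{\rm b}_n|$, match the paper exactly, and you have correctly singled out the genuine issue, namely that $\sigma_i^n$ must remain admissible at every scale.

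Two small imprecisions are worth flagging, neither fatal. First, you assert that Proposition~\ref{prop_discretehyperplane} applied to $v_n$ yields the decay $\sup_{B_r}|v_n-(\tilde a+\tilde{\rm b}\cdot x)|\leq\mu_{n+1}r$, attributing the factor $\mu_{n+1}$ to the proposition. In fact the proposition always delivers the fixed factor $\mu_1$ (determined by $\mu_1 r=2Cr^{1+\alpha_0}$, independent of the degeneracy fed into it), as the paper states explicitly; the bound by $\mu_{n+1}r$ then follows from the monotone choice $r<\mu_1\leq\mu_2\leq\cdots$ made in the algorithm. The role of $\mu_{n+1}$ (possibly strictly larger than $\mu_1$) is not to produce a different decay rate out of the approximation lemma, but to renormalize so that $\sigma_i^{n+1}$ meets the normalization $\sigma_2^{n+1}(c_{n+1})\geq 1$ at the next step; this is what keeps $\|v_{n+1}\|_{L^\infty(B_1)}\leq 1$ and the iteration alive. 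Second, and relatedly, when you describe the second branch of the dichotomy you say one ``shrinks'' $\mu_k<1$; in fact, since the map $\mu\mapsto(\mu\Pi_{k-1}/r^k)\sigma_2(\mu\Pi_{k-1}c_k)$ is increasing and falls below $1$ at $\mu=\mu_{k-1}$, one must take $\mu_k>\mu_{k-1}$ to restore the normalization — consistent with the monotonicity you invoke a few lines later.
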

\begin{proof}
    The proof follows from an induction argument. The case $n=1$ is covered by taking $\phi_1 \coloneqq  a+{\rm b}\cdot x$, where $a\in\mathbb{R}$ and ${\rm b}\in\mathbb{R}^d$ are as in Proposition \ref{prop_discretehyperplane}. It accounts for the base case.
    
    Suppose the case $n=k-1$ has already been established; it remains to verify the statement for $n=k$. To that end, consider the function
    \begin{equation}
        u_{k}(x) \coloneqq  \frac {u_{k-1}(r x) - \ell_{k-1}(rx)}{\mu^*_{k}\cdot r},
    \end{equation}
where $r < \mu_1 \leq \mu_2^* \ldots \leq \mu_{k}^*$ have been chosen earlier. Note that $u_k$ solves
\[
	\min\left(F_1^k(Du_k,D^2u_k),F_2^k(Du_k,D^2u_k)\right)\leq \left\|f_k\right\|_{L^\infty(B_1)}\quad\mbox{in}\quad B_1
\]
and
\[
	\max\left(F_1^k(Du_k,D^2u_k),F_2^k(Du_k,D^2u_k)\right)\geq -\left\|f_k\right\|_{L^\infty(B_1)}\quad\mbox{in}\quad B_1,
\]
where 
\[
	F_i^k(p,M)\coloneqq \sigma_i^k\left(\left|p+\frac{D\phi_{k-1}}{\mu^*_{k-1}}\right|\right)F_k(M),
\]
with    
\[
	\sigma_{i}^k(t) \coloneqq  \frac{\prod\limits_{j=1}^{k} \mu^*_j}{r^k} \sigma_i\left(\prod\limits_{j=1}^{k} \mu^*_j t\right),
\]
\[
	F_k(M) \coloneqq  r^k\left(\prod\limits_{j=1}^{k} \mu^*_j\right)^{-1} F\left(\left(\prod\limits_{j=1}^{k} \mu^*_j\right)\left(r^k\right)^{-1}M\right),
\]
 and 
\[
	f_k(x)\coloneqq f(r^k x).
\]
 
The choice of $(\mu^*_j)_{j=1}^k$ and the construction of the degeneracies $\sigma_i^k$ allows us to evoke Proposition \ref{prop_discretehyperplane} and obtain an affine function $\phi$ satisfying
    \begin{equation*}
         \sup_{x\in B_r} |u_{k}(x) -  \phi(x)| \leq \mu_1r.
    \end{equation*}
Now, rewriting the above in terms of $u$, we get
	\begin{equation*}
            \sup_{x\in B_{r^k}} |u(x) - \phi_k(x)| \leq \left(\prod\limits_{j=1}^{k} \mu^*_j \right) r^k,
        \end{equation*}
        where
	\begin{equation*}
            \phi_k(x) \coloneqq  \phi_1(x) + \sum\limits_{i=2}^{k-1} \phi_i(r^{-1}x)\prod\limits_{i=1}^{k-1} \mu_j^*  r^i = a_k + {\rm b}_k \cdot x.
        \end{equation*}
	In addition,
	\begin{equation*}\label{AK}
		|a_{k} - a_{k-1}| \leq C \left(\prod\limits_{j=1}^{k-1} \mu^*_j\right) r^{k-1},
	\end{equation*}
	and
	\begin{equation*}\label{BK}
		|{\rm b}_{k} - {\rm b}_{k-1}| \leq C \left(\prod\limits_{j=1}^{k-1} \mu^*_j\right),
        \end{equation*} 
        which completes the proof.
 \end{proof}

Next, we  present the proof of Theorem \ref{theo_regularity}, which stems from the choice of the sequence $(\mu_n)_{n\in\mathbb{N}}$ and the geometric decay in Proposition \ref{prop_osccontrol}.

\begin{proof}[Proof of Theorem \ref{theo_regularity}] 
We start by noticing two possibilities concerning the sequence $(\mu^*_n)_{n\in\mathbb{N}}$. Either the sequence repeats after some index $N \geq 2$ or we have $\mu_n^* < \mu^*_{n+1}$ for infinitely many indices $n\in\mathbb{N}$. 

In the former case, it is well-known that $C^{1,\alpha}$-regularity estimates are available, for $0<\alpha<\alpha_0$, where $\alpha_0\in(0,1)$ is the (universal) exponent associated with the Krylov-Safonov theory for $F=0$.
The second possibility amounts to 
    \begin{equation*}
	\frac{\prod\limits_{j=1}^{n+1}\mu_j^*}{ r^{n+1}}\sigma_i\left( \left[\prod\limits_{j=1}^{n+1}\mu^*_j \right] c_{n+1} \right) = 1. 
 \end{equation*}
Here, the definition of $\mu_n^*$ implies
   \begin{equation*}
	\frac{\prod\limits_{j=1}^{n}\mu^*_j}{r^{n}}\sigma_i\left(  \prod\limits_{j=1}^{n}\mu^*_j\,  c_{n} \right)= 1.
	\end{equation*}
	Hence,
    \begin{equation}\label{eq_taun}
	\prod\limits_{j=1}^{n}\mu^*_j = \frac{1}{c_{n}} \sigma_j^{-1}\left( \frac{r^{n}}{\prod\limits_{j=1}^{n}\mu^*_j} \right) \leq \frac{\sigma_i^{-1}(\theta_{n})}{c_{n}}.
    \end{equation}
Define $(\tau_n)_{n\in\mathbb{N}}$ as
    \begin{equation*}
	(\tau_n)_{n\in\mathbb{N}} \coloneqq  \left(\prod\limits_{j=1}^{n}\mu^*_j\right)_{n \in \mathbb{N}}.
	\end{equation*}
	
Notice that
\[
	\begin{split}
		\prod_{j=1}^{m+1} \mu_j^* &= \frac{1}{c_{m+1}} \sigma_i^{-1}\left(\frac{r^{m+1}}{\prod_{i=1}^{m+1} \mu_i^*} \right)  \leq  \frac{\sigma_i^{-1} \left( \theta^{m+1}  \right)}{c_{m+1}} \\&\leq 
		\frac{\max \left( \sigma_1^{-1} \left( \theta^{m+1}\right), \sigma_2^{-1} \left(\theta^{m+1} \right) \right)}{c_{m+1}}.
	\end{split}
\]	
Because of \eqref{eq_taun}, we conclude that $(\tau_n)_{n\in\mathbb{N}} \in\ell^1$, and its norm is bounded by 
\[
	\sum_{n=1}^{\infty}\max\left(\frac{\sigma_1^{-1}(\theta_{n})}{c_{n}},\frac{\sigma_2^{-1}(\theta_{n})}{c_{n}}\right)
\]
The latter is finite due to the summable characterisation of Dini continuity and Assumption \ref{assump_sigmabasic}. As a consequence, we infer that 
\[
	\lim_{n\to\infty}\left(\prod\limits_{k=1}^{n}\mu_k^*\right)=0;
\]
therefore, $(a_n)_{n\in\mathbb{N}}$ and $({\rm b}_n)_{n\in\mathbb{N}}$ are Cauchy sequences and there exists $a_\infty\in\mathbb{R}$ and ${\rm b}_\infty\in\mathbb{R}^d$ such that 
\[
	a_n\longrightarrow a_\infty\qquad\mbox{and}\qquad{\rm b}_n\longrightarrow{\rm b}_\infty,
\]  
as $n\to\infty$. Moreover, notice that 
    \begin{equation}\label{eq_stagecoach}
    	|a_{\infty} - a_n| \leq C\left(\sum\limits_{i=n}^{\infty}\tau_i\right) r^n \qquad \mbox{and} \qquad |{\rm b}_{\infty} - {\rm b}_n| \leq C\left(\sum\limits_{i=n}^{\infty}\tau_i\right).
    \end{equation}
Set $\varphi(x)\coloneqq a_\infty+{\rm b}_\infty\cdot x$ and fix $0<\rho\ll 1$. Let $n\in\mathbb{N}$ be such that $r^n<\rho\leq r^{n+1}$. Combine Proposition \ref{prop_osccontrol} with the inequalities in \eqref{eq_stagecoach} to obtain
\begin{equation}\label{eq_fofs}
	\begin{split}
		\sup_{x\in B_\rho}\left|u(x)-\varphi(x)\right|&\leq \sup_{x\in B_{r^n}}\left|u(x)-\phi_n(x)\right|+ \sup_{x\in B_{r^n}}\left|\varphi(x)-\phi_n(x)\right|\\
			&\leq \frac{1}{r}C\left(\tau_n+\sum_{i=n}^\infty\tau_i\right)\rho\\
			&\leq C\left(\sum_{i=n}^\infty\tau_i\right)\rho.
	\end{split}
\end{equation}
Because the parameter $n\in\mathbb{N}$ in the inequalities in \eqref{eq_fofs} is arbitrary, one can choose $n=n(\rho)$ as
\[
	n(\rho)\coloneqq \left\lfloor 1/\rho\right\rfloor.
\]
Define $\sigma:[0,\infty)\to[0,\infty)$ as
\[
	\sigma(t)\coloneqq \sum_{i=\left\lfloor 1/t\right\rfloor}^\infty\tau_i
\]
if $t>0$, with $\sigma(0)=0$. We conclude that $\sigma(t)$ is a modulus of continuity. Hence, for every $0<\rho\ll1$, \eqref{eq_fofs} becomes
\[
	\sup_{x\in B_\rho}\left|u(x)-\varphi(x)\right|\leq C\sigma(\rho)\rho,
\]
which completes the proof since $\varphi$ is an affine function.    
\end{proof}

\bigskip
	
{\small \noindent{\bf Acknowledgements.} This publication is based upon work supported by King Abdullah University of Science and Technology Research Funding (KRF) under Award No. ORFS-2024-CRG12-6430.3. EP is supported by the Centre for Mathematics of the University of Coimbra (CMUC, funded by the Portuguese Government through FCT/MCTES, DOI 10.54499/UIDB/00324/2020). DS is supported by the National Science Centre (NCN) Grant Sonata Bis 2019/34/E/ST1/00120.

\smallskip 

\noindent{\bf Declarations}

\smallskip

\noindent {Data availability statement:} All data needed are contained in the manuscript.

\noindent {Funding and/or Conflicts of interests/Competing interests:} The authors declare that there are no financial, competing or conflicts of interest.}

\bibliographystyle{plain}
\bibliography{biblio.bib}

\begin{thebibliography}{10}

\bibitem{AmaTei2015}
Marcelo~D. Amaral and Eduardo~V. Teixeira.
\newblock Free transmission problems.
\newblock {\em Comm. Math. Phys.}, 337(3):1465--1489, 2015.

\bibitem{AndPelPimTei2022}
P\^edra Andrade, Daniel Pellegrino, Edgard~A. Pimentel, and Eduardo~V.
  Teixeira.
\newblock {{\(C^1\)}}-regularity for degenerate diffusion equations.
\newblock {\em Adv. Math.}, 409:34, 2022.
\newblock Id/No 108667.

\bibitem{Araujo-Ricarte-Teixeira2015}
Dami\~{a}o~J. Ara\'{u}jo, Gleidson Ricarte, and Eduardo~V. Teixeira.
\newblock Geometric gradient estimates for solutions to degenerate elliptic
  equations.
\newblock {\em Calc. Var. Partial Differential Equations}, 53(3-4):605--625,
  2015.

\bibitem{Birindelli-Demengel2014}
Isabeau Birindelli and Fran\c{c}oise Demengel.
\newblock {$C^{1,\beta}$} regularity for {D}irichlet problems associated to
  fully nonlinear degenerate elliptic equations.
\newblock {\em ESAIM Control Optim. Calc. Var.}, 20(4):1009--1024, 2014.

\bibitem{Borsuk2010}
Mikhail~V. Borsuk.
\newblock {\em Transmission problems for elliptic second-order equations in
  non-smooth domains}.
\newblock Frontiers in Mathematics. Birkh\"{a}user/Springer Basel AG, Basel,
  2010.

\bibitem{BroRamPimTei}
Anne~C. Bronzi, Edgard~A. Pimentel, Giane~C. Rampasso, and Eduardo~V. Teixeira.
\newblock Regularity of solutions to a class of variable-exponent fully
  nonlinear elliptic equations.
\newblock {\em J. Funct. Anal.}, 279(12):108781, 31, 2020.

\bibitem{CafCarSti2021}
Luis~A. Caffarelli, Mar\'{\i}a Soria-Carro, and Pablo~Ra\'{u}l Stinga.
\newblock Regularity for {$C^{1,\alpha}$} interface transmission problems.
\newblock {\em Arch. Ration. Mech. Anal.}, 240(1):265--294, 2021.

\bibitem{Shag1}
Maria Colombo, Sunghan Kim, and Henrik Shahgholian.
\newblock A transmission problem with ({$p,q$})-{L}aplacian.
\newblock {\em Comm. Partial Differential Equations}, 48(2):315--349, 2023.

\bibitem{CraIshLio1992}
Michael~G. Crandall, Hitoshi Ishii, and Pierre-Louis Lions.
\newblock User's guide to viscosity solutions of second order partial
  differential equations.
\newblock {\em Bull. Am. Math. Soc., New Ser.}, 27(1):1--67, 1992.

\bibitem{Cristiana}
Cristiana De~Filippis.
\newblock Fully nonlinear free transmission problems with nonhomogeneous
  degeneracies.
\newblock {\em Interfaces Free Bound.}, 24(2):197--233, 2022.

\bibitem{HuaPimRamSwi2021}
Gerardo Huaroto, Edgard~A. Pimentel, Giane~C. Rampasso, and Andrzej
  \'{S}wi\polhk ech.
\newblock A {F}ully {N}onlinear {D}egenerate {F}ree {T}ransmission {P}roblem.
\newblock {\em Ann. PDE}, 10(1):Paper No. 5, 2024.

\bibitem{Imbert-Silvestre2013}
Cyril Imbert and Luis Silvestre.
\newblock {$C^{1,\alpha}$} regularity of solutions of some degenerate fully
  non-linear elliptic equations.
\newblock {\em Adv. Math.}, 233:196--206, 2013.

\bibitem{ImbSil2016}
Cyril Imbert and Luis Silvestre.
\newblock Estimates on elliptic equations that hold only where the gradient is
  large.
\newblock {\em J. Eur. Math. Soc. (JEMS)}, 18(6):1321--1338, 2016.

\bibitem{DavidJesus}
David Jesus.
\newblock A degenerate fully nonlinear free transmission problem with variable
  exponents.
\newblock {\em Calc. Var. Partial Differential Equations}, 61(1):Paper No. 29,
  21, 2022.

\bibitem{CarKri2024}
Dennis Kriventsov and Mar\'{\i}a Soria-Carro.
\newblock A parabolic free transmission problem: flat free boundaries are
  smooth, 2024.

\bibitem{li2}
YanYan Li and Louis Nirenberg.
\newblock Estimates for elliptic systems from composite material.
\newblock volume~56, pages 892--925. 2003.
\newblock Dedicated to the memory of J\"{u}rgen K. Moser.

\bibitem{li1}
YanYan Li and Michael Vogelius.
\newblock Gradient estimates for solutions to divergence form elliptic
  equations with discontinuous coefficients.
\newblock {\em Arch. Ration. Mech. Anal.}, 153(2):91--151, 2000.

\bibitem{Moo2015}
Connor Mooney.
\newblock Harnack inequality for degenerate and singular elliptic equations
  with unbounded drift.
\newblock {\em J. Differ. Equations}, 258(5):1577--1591, 2015.

\bibitem{Picone1954}
Mauro Picone.
\newblock Sur un probl{\`e}me nouveau pour l'{\'e}quation lin{\'e}aire aux
  d{\'e}riv{\'e}es partielles de la th{\'e}orie math{\'e}matique classique de
  l'{\'e}lasticit{\'e}.
\newblock In {\em Colloque sur les {\'e}quations aux d{\'e}riv{\'e}es
  partielles, CBRM, Bruxelles}, pages 9--11, 1954.

\bibitem{PimSan2020}
Edgard~A. Pimentel and Makson~S. Santos.
\newblock Fully nonlinear free transmission problems.
\newblock {\em Interfaces Free Bound.}, 25(3):325--342, 2023.

\bibitem{PimSwi2022}
Edgard~A. Pimentel and Andrzej {\'S}wi{\k{e}}ch.
\newblock Existence of solutions to a fully nonlinear free transmission
  problem.
\newblock {\em J. Differ. Equations}, 320:49--63, 2022.

\bibitem{SorSti2023}
Mar\'{\i}a Soria-Carro and Pablo~Ra\'{u}l Stinga.
\newblock Regularity of viscosity solutions to fully nonlinear elliptic
  transmission problems.
\newblock {\em Adv. Math.}, 435:Paper No. 109353, 52, 2023.

\end{thebibliography}

\end{document}